\tikzstyle{vertex}=[circle, draw, inner sep=0pt, minimum size=4pt,fill=black]
\newcommand{\vertex}{\node[vertex]}
\setlist[enumerate]{label={\upshape(\alph*)}}
\newtheorem{theorem}{Theorem}[section]
\newtheorem{lemma}[theorem]{Lemma}
\newtheorem{conjecture}[theorem]{Conjecture}
\theoremstyle{definition}
\begin{document}
	
\title{On the Mean Order of Connected Induced Subgraphs of Block Graphs}
	
\author{Kristaps J.~Balodis,  Lucas Mol, and Ortrud R.~Oellermann\thanks{Supported by an NSERC Discovery Grant, number RGPIN-2016-05237.}\\
University of Winnipeg, Winnipeg, MB, Canada R3B 2E9\\
\small \href{mailto:balodis-k@webmail.uwinnipeg.ca}{balodis-k@webmail.uwinnipeg.ca}, \href{mailto:l.mol@uwinnipeg.ca}{l.mol@uwinnipeg.ca}, \href{mailto:o.oellermann@uwinnipeg.ca}{o.oellermann@uwinnipeg.ca}\\
 Matthew E.~Kroeker\thanks{Supported by an NSERC USRA, number 510336-2017.}\\
 University of Waterloo, Waterloo, ON, Canada N2L 3G1\\
 \small \href{mailto:mattkroeker@shaw.ca}{mattkroeker@shaw.ca}}
\date{November 12, 2018}
\maketitle

\begin{abstract}
The average order of the connected induced subgraphs of a graph $G$ is called the {\em mean connected induced subgraph (CIS) order} of $G$.  This is an extension of the mean subtree order of a tree, first studied by Jamison.  In this article, we demonstrate that among all connected block graphs of order $n$, the path $P_n$ has minimum mean CIS order.  This extends a result of Jamison  from trees to connected block graphs, and supports the conjecture of Kroeker, Mol, and Oellermann that $P_n$ has minimum mean CIS order among all connected graphs of order $n$.
\end{abstract}
	
\section{Introduction}

Jamison~\cite{Jamison1983} initiated the study of the mean subtree order of a tree.  A number of extensions of this mean to other (connected) graphs have recently been considered:
\begin{itemize}
\item the mean order of the sub-$k$-trees of a $k$-tree~\cite{StephensOellermann2018},
\item the mean order of the subtrees (i.e., minimally connected subgraphs) of a graph~\cite{ChinGordonMacpheeVincent2018}, and
\item the mean order of the connected induced subgraphs of a graph~\cite{KroekerMolOellermann2018}.
\end{itemize}
For a tree $T$, all of these means equal the mean subtree order of $T$. However, for connected graphs in general, the last two means have rather different behaviour.

In this article, we continue the study of the average order of the connected induced subgraphs of a graph $G$, called the {\em mean connected induced subgraph (CIS) order} of $G$.  An in-depth study of the mean CIS order of cographs was undertaken in~\cite{KroekerMolOellermann2018}, where the connected cographs of order $n$ having largest and smallest mean CIS order were determined (both the maximum and minimum values tend to $n/2$ asymptotically).  Here, we focus on the mean CIS order of block graphs, i.e., graphs for which every block is complete.  We extend several of Jamison's results~\cite{Jamison1983} on the mean subtree order of trees to the more general setting of the mean CIS order of connected block graphs (note that every tree is a connected block graph).

In particular, Jamison~\cite{Jamison1983} demonstrated that among all trees of order $n$, the path $P_n$ has minimum mean subtree order (or equivalently, mean CIS order).  Our main result is that the path $P_n$ has minimum mean CIS order among all connected block graphs of order $n$.  This supports the conjecture of Kroeker, Mol, and Oellermann~\cite{KroekerMolOellermann2018} that the path $P_n$ has minimum mean CIS order among all connected graphs of order $n$, which has been verified for all $n\leq 9$.

A key tool in the proof of our main result is an extension of the ``local-global mean inequality'' proven by Jamison~\cite{Jamison1983} for trees.  For a given tree $T$, and every vertex $v$ of $T$, Jamison demonstrated that the mean order of all connected induced subgraphs of $T$ containing $v$ (i.e., the ``local'' mean CIS order of $T$ at $v$) is at least as large as the mean CIS order of $T$ (i.e., the ``global'' mean CIS order of $T$). It is known that this inequality between local and global mean CIS orders does not extend to all connected graphs (at least not at every vertex)~\cite{KroekerMolOellermann2018}.  However, we note that it was recently proven, in a more general context, that every graph with nonempty edge set contains at least one vertex at which the local mean CIS order is larger than the global mean CIS order (apply~\cite[Theorem 3.1]{AMW} to the collection of vertex sets that induce connected subgraphs of $G$).  In other words, while the local-global mean inequality does not necessarily hold at \emph{every} vertex of a connected graph $G$, it must hold at \emph{some} vertex of $G$.  In this article, we demonstrate that the local-global mean inequality does hold at every vertex of a connected block graph.  This fact is essential to the proofs of the three key lemmas used to establish our main result.  

We now give a brief description of the layout of the article.  In Section~\ref{Preliminaries}, we provide notation and preliminaries that will be used throughout the article.  In Section~\ref{Main_Result}, we state three key lemmas (the Vertex Gluing Lemma, the Edge Gluing Lemma, and the Stetching Lemma), and we use them to prove our main result.  We then describe an interesting connection between the mean CIS order of block graphs and the mean sub-$k$-tree order of $k$-trees, and explain the implications of our main result in this setting.  In Section~\ref{LocalGlobalSection}, we prove the local-global mean inequality for the mean CIS order of block graphs.  In Section~\ref{KeyLemmas}, we prove the Vertex Gluing Lemma, the Edge Gluing Lemma, and the Stretching Lemma.  We conclude with some open problems.
	
\section{Notation and Preliminaries}\label{Preliminaries}

For a graph $G$, the vertex and edges sets of $G$ are denoted by $V(G)$ and  $E(G)$, respectively.  The \emph{order} of $G$ is $|V(G)|$ and the \emph{size} of $G$ is $|E(G)|$.  For $U\subseteq V(G)$, the subgraph of $G$ induced by $U$ is denoted $G[U]$.  The (open) neighbourhood of  a vertex $v$ of $G$ is denoted $N_G(v)$.

Let $G$ be a graph of order $n$.  Let $\mathcal{C}_G$ denote the collection of connected induced subgraphs of $G$. The \emph{CIS polynomial of $G$} is given by
\[
\Phi_G(x)=\sum_{H\in \mathcal{C}_G}x^{|V(H)|}=\sum_{i=1}^na_ix^i,
\]
where $a_i$ is the number of connected induced subgraphs of $G$ of order $i$ for each $i\in\{1,\dots,n\}$.
One easily verifies that $\Phi_G(1)$ is the total number of connected induced subgraphs of $G$, and that $\Phi'_G(1)$ is the sum of the orders of all connected induced subgraphs of $G$.  Throughout, we use the shorthand notation $N_G=\Phi_G(1)$ and $W_G=\Phi'_G(1)$.  The \emph{mean CIS order of $G$}, denoted $M_G$, is given by
\[
M_G=\frac{\Phi'_G(1)}{\Phi_G(1)}=\frac{W_G}{N_G}.
\]
For a vertex $v\in V(G)$, let $\mathcal{C}_{G,v}$ denote the collection of connected induced subgraphs of $G$ containing $v$.  The \textit{local CIS polynomial of $G$ at $v$} is given by
\[
\Phi_{G, v}(x)=\sum_{H\in\mathcal{C}_{G,v}}x^{|V(H)|}=\sum_{i=1}^nb_ix^i,
\]
where $b_i$ is the number of connected induced subgraphs of $G$ of order $i$ containing $v$ for each $i\in\{1,\dots,n\}.$  So $N_{G,v}=\Phi_{G,v}(1)$ denotes the total number of connected induced subgraphs of $G$ containing $v$, and  $W_{G,v}=\Phi'_{G,v}(1)$ denotes the sum of the orders of all connected induced subgraphs of $G$ containing $v$.  The \emph{local mean CIS order of $G$}, denoted $M_{G,v}$, is given by
\[
M_{G,v}=\frac{\Phi'_{G,v}(1)}{\Phi_{G,v}(1)}=\frac{W_{G,v}}{N_{G,v}}.
\]

The next lemma gives a recursion for the local mean CIS order of a block graph at a cut vertex $v$.  It holds trivially if $v$ is not a cut vertex.

\begin{lemma}\label{LocalSum}
Let $G$ be a block graph with vertex $v$, and let $H_1,\dots, H_k$ be the components of $G-v$.  For $i\in\{1,\dots,k\},$ let $G_i=G[V(H_i)\cup \{v\}]$.   Then
\[
M_{G,v}=\left[\sum_{i=1}^k M_{G_i,v}\right]-(k-1).
\]
Further, we have
\[
M_{G,v}\geq M_{G_i,v}
\]
for all $i\in\{1,\dots,k\}.$
\end{lemma}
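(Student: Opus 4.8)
The plan is to exploit the separating role of $v$ to obtain a multiplicative decomposition of the local CIS generating function, from which both the identity and the inequality follow by taking a logarithmic derivative at $x=1$.

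First I would establish the underlying structural fact. Fix a vertex set $S$ with $v\in S$, and for each $i$ set $S_i=S\cap(V(H_i)\cup\{v\})$, so that $v\in S_i$ for every $i$. Since $H_1,\dots,H_k$ are the components of $G-v$, there are no edges of $G$ joining vertices in distinct $H_i$ and $H_j$; hence every edge of $G[S]$ either is incident with $v$ or lies inside a single $G_i[S_i]$. From this I would argue that $G[S]$ is connected if and only if each $G_i[S_i]$ is connected: if all the pieces are connected then every vertex of $S$ reaches $v$ within its own piece, and conversely a connected $G[S]$ forces each vertex of $S_i$ to reach $v$ only through $G_i[S_i]$. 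Consequently the map $S\mapsto(S_1,\dots,S_k)$ is a bijection from $\mathcal{C}_{G,v}$ onto $\prod_{i=1}^k\mathcal{C}_{G_i,v}$.

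Next I would pass to generating functions. Under this bijection $v$ is the only vertex shared by the pieces, so $|S|=1+\sum_{i=1}^k(|S_i|-1)$. Writing the local CIS polynomials in shifted form, this identity yields the factorization $x^{-1}\Phi_{G,v}(x)=\prod_{i=1}^k x^{-1}\Phi_{G_i,v}(x)$. Taking logarithms and differentiating gives $\Phi'_{G,v}(x)/\Phi_{G,v}(x)=(1-k)/x+\sum_{i=1}^k \Phi'_{G_i,v}(x)/\Phi_{G_i,v}(x)$, and evaluating at $x=1$ (recalling $M_{G,v}=\Phi'_{G,v}(1)/\Phi_{G,v}(1)$) produces exactly $M_{G,v}=\sum_{i=1}^k M_{G_i,v}-(k-1)$. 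I would note in passing that this part of the argument uses only that $v$ separates the $H_i$, not the full block-graph hypothesis.

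Finally, the inequality follows directly from the identity. Rearranging gives $M_{G,v}-M_{G_i,v}=\sum_{j\neq i}(M_{G_j,v}-1)$, so it suffices to show $M_{G_j,v}\geq 1$ for every $j$. This is immediate, since $M_{G_j,v}$ is an average of the orders of connected induced subgraphs of $G_j$ containing $v$, each of which has at least one vertex. I expect the only delicate point in the whole argument to be the clean justification of the connectivity decomposition and the resulting bijection; once that multiplicative structure is in place, the identity and the inequality are routine bookkeeping together with a one-line averaging bound.
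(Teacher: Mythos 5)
Your proposal is correct and follows essentially the same route as the paper: the factorization $\Phi_{G,v}(x)=x^{1-k}\prod_{i=1}^k\Phi_{G_i,v}(x)$, the logarithmic derivative evaluated at $x=1$, and the observation that $M_{G_j,v}\geq 1$ yields the inequality. The only difference is that you spell out the bijection behind what the paper calls a ``straightforward counting argument,'' which is a welcome but not substantive addition.
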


\begin{proof}
By a straightforward counting argument,
\[
\Phi_{G,v}(x)=\tfrac{1}{x^{k-1}}\prod_{i=1}^k\Phi_{G_i,v}(x).
\]
Taking the natural logarithm on both sides and differentiating with respect to $x$, we obtain
\[
\frac{\Phi'_{G,v}(x)}{\Phi_{G,v}(x)}=\left[\sum_{i=1}^k\frac{\Phi'_{G_i,v}(x)}{\Phi_{G_i,v}(x)}\right]-\frac{k-1}{x}.
\]
Substituting $x=1$ yields
\[
M_{G,v}=\left[\sum_{i=1}^k M_{G_i,v}\right]-(k-1).
\]
Since $M_{G_i,v}\geq 1$ for all $i\in\{1,\dots,k\},$ it follows that $M_{G,v}\geq M_{G_i,v}$ for all $i\in\{1,\dots,k\}$.
\end{proof}

We extend the notion of the local mean CIS order of a graph $G$ in two natural ways.  For a subset $U$ of $V(G)$, we let $M_{G,U}$ denote the mean order of all connected induced subgraphs of $G$ containing every vertex of $U$.  We let $\Phi_{G,U}(x)$ denote the corresponding generating polynomial.  We let $M^*_{G,U}$ denote the mean order of all connected induced subgraphs of $G$ containing at least one vertex of $U$.  We let $\Phi^*_{G,U}(x)$ denote the corresponding generating polynomial, and $N^*_{G,U}=\Phi^*_{G,U}(1)$ and $W^*_{G,U}(x)=\Phi^{*'}_{G,U}(1).$  Note that if $U$ contains only a single vertex $u$, then $M_{G,U}=M^*_{G,U}=M_{G,u}.$

\begin{lemma}\label{LocalBlockLemma}
Let $G$ be a block graph, and let $U=\{u_1,\dots,u_k\}$ be the vertex set of a single block $B$ of $G$.  For each $i\in\{1,\dots,k\},$ let $G_i$ be the connected component of $G-E(B)$ containing $u_i$.  Then
\begin{align}\label{LocalAtBlock}
M^*_{G,U}=\frac{N^*_{G,U}+1}{N^*_{G,U}}\sum_{i=1}^k \frac{W_{G_i,u_i}}{N_{G_i,u_i}+1}.
\end{align}
\end{lemma}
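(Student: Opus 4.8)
The plan is to set up a bijective decomposition of the connected induced subgraphs of $G$ that meet $U$, translate it into a product identity between generating polynomials, and then extract $N^*_{G,U}$ and $W^*_{G,U}$ by evaluating at $x=1$ and taking a logarithmic derivative, exactly as in the proof of Lemma~\ref{LocalSum}.

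First I would describe the structure of a connected induced subgraph $H$ of $G$ that meets $U$. Let $S=V(H)\cap U$, which is nonempty by assumption, and for each $i$ set $H_i=G[V(H)\cap V(G_i)]$. Since $G$ is a block graph and $B$ is a block, the components $G_1,\dots,G_k$ are pairwise vertex-disjoint, every edge of $G$ joining two of them is an edge of $B$, and in particular $u_i$ is the only vertex of $G_i$ incident with an edge of $B$. Using this I would argue two things: (i) if $u_i\notin V(H)$ then $V(H)\cap V(G_i)=\emptyset$, since any path in $H$ from a vertex of $G_i$ to a vertex outside $G_i$ must pass through $u_i$; and (ii) if $u_i\in V(H)$ then $H_i$ is a connected induced subgraph of $G_i$ containing $u_i$, because the initial segment of any $H$-path from a vertex of $V(H)\cap V(G_i)$ to $u_i$ stays inside $G_i$. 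Conversely, because $B$ is complete, for any nonempty $S\subseteq U$ and any choice of connected induced subgraphs $H_i\ni u_i$ of $G_i$ (one for each $u_i\in S$), the union $\bigcup_{u_i\in S}H_i$ is a connected induced subgraph of $G$ meeting $U$ in exactly $S$. This yields a bijection between the subgraphs counted by $\Phi^*_{G,U}$ and the pairs $(S,\{H_i\}_{u_i\in S})$.

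Since $|V(H)|=\sum_{u_i\in S}|V(H_i)|$, this bijection gives
\[
\Phi^*_{G,U}(x)=\sum_{\emptyset\neq S\subseteq U}\ \prod_{u_i\in S}\Phi_{G_i,u_i}(x)=\prod_{i=1}^k\bigl(1+\Phi_{G_i,u_i}(x)\bigr)-1,
\]
the last equality being the standard expansion of a product over subsets, with the empty set contributing the subtracted $1$. Writing $P(x)=\prod_{i=1}^k\bigl(1+\Phi_{G_i,u_i}(x)\bigr)$, evaluation at $x=1$ gives $P(1)=\prod_{i=1}^k(1+N_{G_i,u_i})=N^*_{G,U}+1$, while the logarithmic derivative gives $P'(1)=P(1)\sum_{i=1}^k\frac{\Phi'_{G_i,u_i}(1)}{1+\Phi_{G_i,u_i}(1)}=(N^*_{G,U}+1)\sum_{i=1}^k\frac{W_{G_i,u_i}}{N_{G_i,u_i}+1}$. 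Since $\Phi^*_{G,U}(x)=P(x)-1$, we have $W^*_{G,U}=\Phi^{*'}_{G,U}(1)=P'(1)$, and dividing by $N^*_{G,U}$ produces the claimed formula.

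The routine part is the generating-function bookkeeping; the step that carries the real content is the structural decomposition, and in particular the converse direction, where completeness of $B$ is exactly what guarantees that every nonempty $S\subseteq U$ together with arbitrary connected pieces rooted at the $u_i$ glues into a single connected subgraph. I expect the main obstacle to be stating the bijection cleanly enough that both the forced emptiness of $H$ on $G_i$ when $u_i\notin V(H)$ and the connectedness of $H_i$ when $u_i\in V(H)$ are transparent; both ultimately rest on the fact that $u_i$ is the unique vertex through which $G_i$ connects to the rest of $G$.
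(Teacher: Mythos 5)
Your proposal is correct and follows essentially the same route as the paper: the paper's proof consists of exactly the identity $\Phi^*_{G,U}(x)+1=\prod_{i=1}^k\bigl(1+\Phi_{G_i,u_i}(x)\bigr)$ (stated there as ``a straightforward counting argument,'' which your bijective decomposition fills in), followed by the same logarithmic differentiation and evaluation at $x=1$.
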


\begin{proof}
By a straightforward counting argument,
\[
\Phi^*_{G,U}(x)+1=\prod_{i=1}^k \left[ 1+\Phi_{G_i,u_i}(x)\right].
\]
Taking the natural logarithm on both sides and differentiating, we obtain
\[
\frac{\Phi^{*'}_{G,U}(x)}{\Phi^*_{G,U}(x)+1}=\sum_{i=1}^k\frac{\Phi'_{G_i,u_i}(x)}{\Phi_{G_i,u_i}(x)+1}.
\]
Substituting $x=1$ gives
\[
\frac{W^*_{G,U}}{N^*_{G,U}+1}=\sum_{i=1}^k\frac{W_{G_i,u_i}}{N_{G_i,u_i}+1}.
\]
Multiplying both sides by $\frac{N^*_{G,U}+1}{N^*_{G,U}}$ and noting that $M^*_{G,U}=\frac{W^*_{G,U}}{N^*_{G,U}},$ we obtain~(\ref{LocalAtBlock}).
\end{proof}

A key idea that we use in many of our arguments states that if the connected induced subgraphs of a graph $G$ can be partitioned into two or more sets, then $M_G$ is a convex combination (or weighted average) of the mean orders of each of the sets in the partition. This tool was used by Jamison~\cite[Lemma 3.8]{Jamison1983} for the mean subtree order of a tree.   For example, since every connected induced subgraph of $G$ either contains a given vertex $v$, or does not contain $v$, we can write
\[
\Phi_G(x)=\Phi_{G,v}(x)+\Phi_{G-v}(x).
\]
It follows that $M_G$ is a convex combination of $M_{G,v}$ and $M_{G-v}$.  Another useful application of this principle is to disconnected graphs.  If $G$ is a disconnected graph with components $G_1,\dots,G_k$, then $M_G$ is a convex combination of $M_{G_1},\dots,M_{G_k}$.  It follows that $\min\{M_{G_i}\}\leq M_G\leq \max\{M_{G_i}\}.$

\section{Proof of the Main Result}\label{Main_Result}

In this section, we show that among all block graphs of order $n$, the path has minimum mean CIS order.  We use three key lemmas, namely the Vertex Gluing Lemma, the Edge Gluing Lemma, and the Stretching Lemma, which are proven in Section~\ref{KeyLemmas}.  We state these lemmas here, and provide illustrations depicting how they are used in the proof of the main result (see Figure~\ref{LemmaPictures}). By \emph{gluing} two vertices from disjoint graphs, we mean the process of identifying these two vertices.


\medskip

\noindent{\bf The Vertex Gluing Lemma} (Lemma \ref{vertex_gluing})\\
\textit{Let $H$ be a connected block graph of order at least $2$ having vertex $v$.  Fix a natural number $n\geq 3$.  Let $P:u_1\dots u_n$ be a path of order $n$. For $s \in \{1, \dots, n\}$, let $G_s$ be the block graph obtained from the disjoint union of $P_n$ and $H$ by gluing $v$ to $u_s$.  If $1\leq i<j\leq \tfrac{n+1}{2}$, then $M_{G_i}<M_{G_j}.$}
\medskip

In the notation of the Vertex Gluing Lemma, $G_s \cong G_{n-s+1}$.  Thus, it follows from the Vertex Gluing Lemma that $G_1\cong G_n$ has strictly smaller mean CIS order than $G_s$ for all $2\leq s\leq n-1$ (see Figure~\ref{VertexPicture}).  We note that the Vertex Gluing Lemma extends the Gluing Lemma of~\cite{MolOellermann2017} from trees to block graphs.

To simplify the statement of the Edge Gluing Lemma, we will refer to a leaf as a vertex of degree at most $1$ (so that the single vertex of the path of order $1$ is considered a leaf).

 \medskip

\noindent{\bf The Edge Gluing Lemma} (Lemma \ref{edge_gluing})\\
\textit{Let $H$ be a connected block graph of order at least $3$ with adjacent non-cut vertices $u$ and $v$.  Fix a natural number $n\geq 4$.  For $s\in\{1,2,\dots,n-1\}$, let $G_s$ be the graph obtained from $H\cup P_s\cup P_{n-s}$ by gluing a leaf of $P_s$  to $u$ and a leaf of $P_{n-s}$ to $v$.  If $1\leq i<j\leq \tfrac{n}{2}$, then $M_{G_i}<M_{G_j}.$}

\medskip

In the notation of the Edge Gluing Lemma, $G_s \cong G_{n-s}$. Thus, it follows from the Edge Gluing Lemma that $G_1\cong G_{n-1}$ has strictly smaller mean CIS order than $G_s$ for all $2\leq s\leq n-2$ (see Figure~\ref{EdgePicture}).

We now introduce notation used in the statement of the Stretching Lemma. For a fixed integer $n\geq 3$ and $s\in\{1,2,\dots,n-1\}$, let $F_{s,n-s}$ (or $F_s$ for short) denote the graph obtained from the disjoint union of $K_{s}$ and $P_{n-s}$ by joining a leaf of $P_{n-s}$ to every vertex of $K_s$.  Note that if $s=1$, then $F_s\cong P_n$, while if $s=n-1$, then $F_s\cong K_n$.

\medskip

\noindent{\bf The Stretching Lemma} (Lemma \ref{stretching_lemma})
\textit{Let $H$ be a connected block graph of order at least $2$ having vertex $u$.  Fix a natural number $n\geq 3$.  Let $v$ be a vertex of $F_s=F_{s,n-s}$ belonging to the initial $K_s$.  For $s \in \{1, \dots, n-1\}$, let $G_s$ be the block graph obtained from the disjoint union of $H$ and $F_{s,n-s}$ by identifying $u$ and $v$.  If $1\leq i<j\leq n-1$, then $M_{G_i}<M_{G_j}.$}

\medskip

In particular, in the notation of the Stretching Lemma, we see that $G_1$ has strictly smaller mean CIS order than $G_s$ for all $2\leq s\leq n-1$ (see Figure~\ref{StretchingPicture}).

\begin{figure}
\centering{
\begin{subfigure}[b]{\textwidth}
\centering{
\begin{tikzpicture}
\vertex (0) at (0,0) {};
\vertex (1) at (1,0) {};
\vertex (2) at (2.5,0) {};
\vertex (3) at (4,0) {};
\vertex (4) at (5,0) {};
\path
(0) edge (1)
(3) edge (4);
\draw (1,0) -- (1.4,0);
\draw[dotted] (1.4,0)--(2.1,0);
\draw (2.1,0) -- (2.5,0);
\draw (2.5,0) -- (2.9,0);
\draw[dotted] (2.9,0)--(3.6,0);
\draw (3.6,0) -- (4,0);
\draw (2.5,-1) circle [radius=1];
\node[above,yshift=2pt] at (0,0) {\footnotesize $u_1$};
\node[above,yshift=2pt] at (1,0) {\footnotesize $u_2$};
\node[above,yshift=2pt] at (2.5,0) {\footnotesize $u_s$};
\node[above,yshift=2pt] at (4,0) {\footnotesize $u_{n-1}$};
\node[above,yshift=2pt] at (5,0) {\footnotesize $u_n$};
\node[below,yshift=-2pt] at (2.5,0) {\footnotesize $v$};
\node at (2.5,-1) {$H$};
\end{tikzpicture}
\hspace{1cm}
\begin{tikzpicture}
\vertex (3) at (3,0) {};
\vertex (4) at (4,0) {};
\vertex (5) at (5.5,0) {};
\vertex (6) at (6.5,0) {};
\path
(3) edge (4)
(5) edge (6);
\draw (4,0) -- (4.4,0);
\draw[dotted] (4.4,0)--(5.1,0);
\draw (5.1,0) -- (5.5,0);
\draw (3,-1) circle [radius=1];
\node[above,yshift=2pt] at (3,0) {\footnotesize $u_1$};
\node[above,yshift=2pt] at (4,0) {\footnotesize $u_{2}$};
\node[above,yshift=2pt] at (5.5,0) {\footnotesize $u_{n-1}$};
\node[above,yshift=2pt] at (6.5,0) {\footnotesize $u_n$};
\node[below,yshift=-2pt] at (3,0) {\footnotesize $v$};
\node at (3,-1) {$H$};
\end{tikzpicture}
\caption{The Vertex Gluing Lemma: The graph $G_1$ (right) has smaller mean CIS order than the graph $G_s$ (left) for all $2\leq s\leq n-1$.}\label{VertexPicture}}
\end{subfigure}\\
\bigskip
\begin{subfigure}[b]{\textwidth}
\centering{
\begin{tikzpicture}
\vertex (0) at (0,0) {};
\vertex (1) at (1,0) {};
\vertex (2) at (2.5,0) {};
\vertex (3) at (3.5,0) {};
\vertex (4) at (5,0) {};
\vertex (5) at (6,0) {};
\path
(0) edge (1)
(2) edge (3)
(4) edge (5);
\draw (1,0) -- (1.4,0);
\draw[dotted] (1.4,0)--(2.1,0);
\draw (2.1,0) -- (2.5,0);
\draw (3.5,0) -- (3.9,0);
\draw[dotted] (3.9,0)--(4.6,0);
\draw (4.6,0) -- (5,0);
\draw (3,-0.85) circle [radius=1];
\draw [decorate,decoration={brace,amplitude=4pt},xshift=0pt,yshift=6pt]
(-0.2,0) -- (2.7,0) node [above,black,midway,yshift=4pt]
{\footnotesize $s$ vertices};
\draw [decorate,decoration={brace,amplitude=4pt},xshift=0pt,yshift=6pt]
(3.3,0) -- (6.2,0) node [above,black,midway,yshift=4pt]
{\footnotesize $n-s$ vertices};
\node[below,yshift=-2pt] at (2.5,0) {\footnotesize $u$};
\node[below,yshift=-2pt] at (3.5,0) {\footnotesize $v$};
\node at (3,-0.85) {$H$};
\end{tikzpicture}
\hspace{1cm}
\begin{tikzpicture}
\vertex (3) at (3,0) {};
\vertex (4) at (4,0) {};
\vertex (5) at (5.5,0) {};
\vertex (6) at (6.5,0) {};
\path
(3) edge (4)
(5) edge (6);
\draw (4,0) -- (4.4,0);
\draw[dotted] (4.4,0)--(5.1,0);
\draw (5.1,0) -- (5.5,0);
\draw (3.5,-0.85) circle [radius=1];
\draw [decorate,decoration={brace,amplitude=4pt},xshift=0pt,yshift=6pt]
(2.8,0) -- (6.7,0) node [above,black,midway,yshift=4pt]
{\footnotesize $n$ vertices};
\node[below,yshift=-2pt] at (3,0) {\footnotesize $u$};
\node[below,yshift=-2pt] at (4,0) {\footnotesize $v$};
\node at (3.5,-0.85) {$H$};
\end{tikzpicture}
\caption{The Edge Gluing Lemma: The graph $G_1$ (right) has smaller mean CIS order than the graph $G_s$ (left) for all $2\leq s\leq n-2$. Note: $u$ and $v$ must be adjacent in $H$, and must not be cut vertices of $H$.}\label{EdgePicture}}
\end{subfigure}\\
\bigskip
\begin{subfigure}[b]{\textwidth}
\centering{
\begin{tikzpicture}
\draw (-0.5,0) circle[radius=1];
\node at (-0.5,0) {$H$};
\draw (1.25,0) circle[radius=0.75];
\node at (1.25,0) {\footnotesize $K_{s+1}$};
\draw [decorate,decoration={brace,amplitude=4pt},xshift=0pt,yshift=6pt]
(1.8,0) -- (5.7,0) node [above,black,midway,yshift=4pt]
{\footnotesize $n-s$ vertices};
\vertex (1) at (0.5,0) {};
\vertex (2) at (2,0) {};
\vertex (3) at (3,0) {};
\vertex (4) at (4.5,0) {};
\vertex (5) at (5.5,0) {};
\path
(2) edge (3)
(4) edge (5);
\draw (3,0) -- (3.4,0);
\draw[dotted] (3.4,0)--(4.1,0);
\draw (4.1,0) -- (4.5,0);
\end{tikzpicture}
\hspace{1cm}
\begin{tikzpicture}
\draw (1,0) circle[radius=1];
\node at (1,0) {$H$};
\draw [decorate,decoration={brace,amplitude=4pt},xshift=0pt,yshift=6pt]
(1.8,0) -- (5.7,0) node [above,black,midway,yshift=4pt]
{\footnotesize $n$ vertices};
\vertex (2) at (2,0) {};
\vertex (3) at (3,0) {};
\vertex (4) at (4.5,0) {};
\vertex (5) at (5.5,0) {};
\path
(2) edge (3)
(4) edge (5);
\draw (3,0) -- (3.4,0);
\draw[dotted] (3.4,0)--(4.1,0);
\draw (4.1,0) -- (4.5,0);
\end{tikzpicture}
\caption{The Stretching Lemma: The graph $G_1$ (right) has smaller mean CIS order than the graph $G_s$ (left) for all $2\leq s\leq n-1$.}\label{StretchingPicture}}
\end{subfigure}
}
\caption{An illustration of the use of the Vertex Gluing Lemma, the Edge Gluing Lemma, and the Stretching Lemma, where $H$ is a block graph.}
\label{LemmaPictures}
\end{figure}
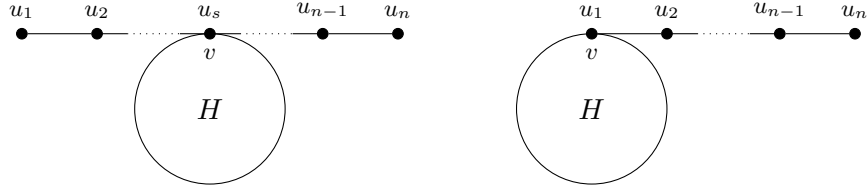
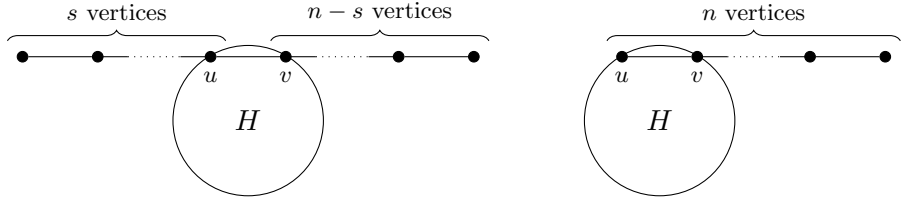
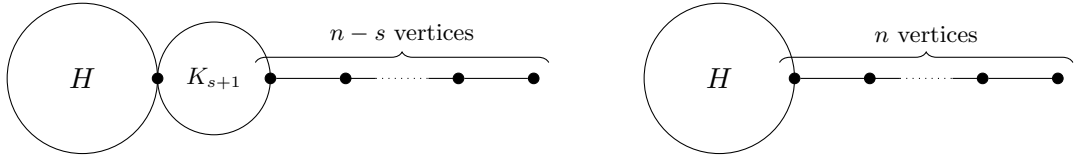

\medskip

We now give some terminology and some basic facts used in the proof of our main result.  Let $G$ be a  graph that has at least two blocks.  An \emph{end-block} is a block that contains exactly one cut vertex. A \emph{cyclic block} is a block that contains at least $3$ vertices. Thus, cyclic blocks in block graphs are complete graphs of order at least $3$. The \emph{block cut vertex tree} $T$ of a connected graph $G$ is the graph whose vertex set is the union of the set of blocks of $G$ and the set of cut vertices of $G$, where a block is joined to a cut vertex if and only if the cut vertex is contained in the block.
If $G$ is a block graph that is not a path, and if $G$ has a leaf $w$, then the shortest path $P$ from $w$ to a vertex $v$ of degree at least $3$ is called an {\em antenna} of $G$ {\em incident} with $v$. If $B$ is a block containing $v$ and no other vertex of $P$, we say that the antenna $P$ is {\em incident} with $B$.  Finally, we use the fact that $M_{P_n} = \frac{n+2}{3}$ (see~\cite{Jamison1983}), and the fact that $M_{K_n} = \frac{n2^{n-1}}{2^n -1}$ (see~\cite{KroekerMolOellermann2018}).  We are now ready to prove our main result.

\begin{theorem}
If $G$ is a connected block graph of order $n$, then $M_G\geq \frac{n+2}{3}$, with equality if and only if $G\cong P_n$.  In other words, the path $P_n$ is the unique connected block graph of order $n$ having minimum mean CIS order $\frac{n+2}{3}.$
\end{theorem}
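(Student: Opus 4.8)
The plan is to argue by extremality. Since there are only finitely many connected block graphs of order $n$, let $G$ minimize $M_G$ among them; it then suffices to prove the single combinatorial statement that \emph{every connected block graph of order $n$ that is not a path admits one of the three lemma configurations}. Indeed, each of the Vertex Gluing, Edge Gluing, and Stretching Lemmas produces, from such a configuration, a connected block graph of the same order $n$ with strictly smaller mean CIS order. Applied to the minimizer $G$, this forces $G\cong P_n$ (otherwise we contradict minimality); applied to an arbitrary $G'\not\cong P_n$ it yields some $G''$ of order $n$ with $M_{G''}<M_{G'}$, and then $M_{G'}>M_{G''}\ge M_{G}=M_{P_n}=\tfrac{n+2}{3}$ by minimality, giving both the bound and the uniqueness of the extremal graph. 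The cases $n\le 3$ are checked by hand (for $n=3$ the only connected block graphs are $P_3$ and $K_3$, and $M_{K_3}=\tfrac{12}{7}>\tfrac{5}{3}$), so I assume $n\ge 4$.

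To locate a configuration, I first note that a connected block graph with maximum degree at most $2$ is either $P_n$ or $K_3$ (a block graph cycle must be complete). Hence a non-path $G$ has a vertex of degree at least $3$, arising either from tree-branching or from a cyclic block. I would suppress all degree-$\le 2$ internal path vertices to obtain a skeleton whose only nontrivial features are branch vertices and cyclic blocks, and then select a \emph{peripheral} feature: one that is separated from every other feature by a single cut vertex, so that all of its remaining incident branches are antennas.

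If the peripheral feature is a branch vertex $v$, then $\deg_G(v)\ge 3$ and at least two of its branches are antennas attached by bridges. Taking the path $P$ formed by two of these antennas through $v$ (so $v$ is interior) and letting $H$ be the remainder of $G$ rooted at $v$ (of order at least $2$), $G$ is exactly the graph $G_s$ of the Vertex Gluing Lemma for an interior index $s$, so $M_G>M_{G_1}$, contradicting minimality. If the peripheral feature is a cyclic block $B$, I split into two sub-cases. When at least two vertices of $B$ carry pendant antennas, I choose an edge of $B$ joining two such vertices; each is a non-cut vertex of the graph $H$ obtained from $G$ by deleting the interiors of those two antennas, and $|V(H)|\ge|V(B)|\ge 3$, so the Edge Gluing Lemma applies and consolidating the two antennas to one side strictly lowers the mean. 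Otherwise $B$ has a single non-simplicial vertex bearing the remainder $H$ together with at most one pendant path, all other vertices of $B$ being simplicial; then $G$ is the graph $G_s$ of the Stretching Lemma with $F$ equal to the clique (together with that optional pendant path) and $s=|V(B)|-1\ge 2$, whence $M_G=M_{G_s}>M_{G_1}$, again a contradiction.

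The hard part will be the completeness of this case analysis, together with the bookkeeping needed to realize the Stretching configuration. Specifically, one must confirm that a peripheral cyclic block carrying at most one antenna really does have all its other vertices simplicial (so that $F$ is a bare clique with at most one pendant path and $H$ attaches at a single vertex), and one must check the order bounds, the non-cut-vertex hypotheses, and the interior-index conditions of each lemma, as well as verify that each reduction remains within connected block graphs of order $n$. The one genuine exception is the bare clique $G\cong K_n$, which has no external $H$ to feed into the Stretching Lemma; this case is handled directly from the closed forms $M_{K_n}=\frac{n2^{n-1}}{2^n-1}$ and $M_{P_n}=\frac{n+2}{3}$, whose difference is positive for all $n\ge 3$. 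With these pieces assembled, the minimizer must be $P_n$, and the strictness built into each of the three lemmas upgrades the inequality $M_G\ge\frac{n+2}{3}$ to equality precisely when $G\cong P_n$.
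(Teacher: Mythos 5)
Your high-level strategy is the same as the paper's: dispose of $K_n$ by comparing the closed forms, and show that every other non-path connected block graph of order $n$ realizes one of the three lemma configurations, so that a graph of the same order with strictly smaller mean CIS order exists. The extremality packaging and the ``peripheral feature'' language are only cosmetic differences from the paper's analysis of the block cut vertex tree $T$. The gap is in the case of a peripheral cyclic block $B$, and it is exactly the part you flagged as ``the hard part,'' so it cannot be waved through. Two things go wrong. First, in your sub-case where two vertices of $B$ carry pendant antennas, the assertion that both become non-cut vertices of $H$ after deleting the antenna interiors is false in general: a vertex of $B$ can carry an antenna \emph{and} further structure (e.g.\ lie in three blocks), and it then remains a cut vertex of $H$, so the Edge Gluing Lemma does not apply to that pair. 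Second, your two sub-cases are not exhaustive. Take $K_3$ on $\{a_1,a_2,a_3\}$ with a pendant leaf at $a_1$ and with $a_2$ joined by a path to a second triangle that itself has a pendant leaf: every peripheral feature of this graph is a cyclic block with exactly one antenna-bearing vertex and a second non-simplicial vertex bearing a non-path remainder, so there are not two antennas (your first sub-case fails) and there is not a single non-simplicial vertex (your second fails). This configuration is precisely what the Stretching Lemma is for --- $F$ is the clique together with the antenna and $H$ is the remainder attached at the \emph{other} vertex of the clique; note that in $F_{s,n-s}$ the pendant path and $H$ attach at different vertices of the clique, contrary to your description of $H$ and the path sharing a vertex.

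The paper's bookkeeping is arranged to avoid both problems: if $T$ is a path, or if some path component of $T-u$ contains a cyclic block, the Stretching Lemma is applied first; only after that, when $u$ is a block with $k\ge3$ cut vertices of which at least $k-1$ lie in exactly two blocks and carry pure antennas, is the Edge Gluing Lemma invoked on two such vertices, which is what guarantees the non-cut hypothesis. You would also need to patch the degenerate instance where $G$ is a single clique with one pendant path and nothing else: your prescription ``$F$ equals the clique together with the pendant path'' leaves $H$ empty, violating the hypothesis $|V(H)|\ge 2$; one instead takes $F$ to be the bare clique $F_{|V(B)|-1,1}$ and puts the pendant path into $H$. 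With these repairs your argument closes, but as written the reduction does not cover all non-path connected block graphs.
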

\begin{proof}
Let $G\not\cong P_n$ be a connected block graph of order $n$.  We demonstrate that there is a connected block graph of order $n$ that has smaller mean CIS order than $G$, from which the statement follows.  Since the only connected block graphs of order $1$ and $2$ are paths, we may assume that $n \ge 3$. If $G$ has only one block, then $G \cong K_n$. Since $M_{P_n}=\frac{n+2}{3}< \tfrac{n2^{n-1}}{2^n-1}=M_G$ for $n \ge 3$, our claim follows.  Thus, we may assume that $G$ has at least two blocks.
For the remainder of the proof we consider two cases that depend on the structure of the block cut vertex tree $T$ of $G$.

If $T$ is a path, then it follows, since $G$ is not a path, that $G$ has a cyclic block. So either $G$ has a cyclic end-block or $G$ has a cyclic block that is incident with an antenna and contains exactly two cut-vertices. We can thus apply the Stretching Lemma to $G$ to obtain a connected block graph that has smaller mean CIS order.

Assume next that $T$ is not a path. Then $T$ contains a vertex of degree at least 3. So $T$ has a vertex  $u$ of degree at least 3, such that all but possibly one connected component of $T-u$ is isomorphic to a path. Let $\mathcal{C}$ be the family of components of $T-u$ that induce paths in $T-u$.  Suppose that at least one of the members of $\mathcal{C}$, $Q$ say,  contains a  block that is cyclic in $G$. In this case we can apply the Stretching Lemma to obtain a connected block graph with smaller mean CIS order than $G$. We may thus assume that all members of $\mathcal{C}$ correspond to antenna in $G$ that are incident with $u$.

Now $u$ is either a cut vertex of $G$, or a block of $G$. If $u$ is a cut vertex of $G$, then we can apply the Vertex Gluing Lemma to obtain a connected block graph whose mean CIS order is less than that of $G$.  So we may assume that $u$ is a block of $G$.  Let $v_1, v_2, \ldots, v_k$ be the cut vertices of $G$ that belong to the block $u$. By assumption, $k \ge 3$. Moreover, we may assume that, for $1 \le i \le k-1$, the component of $T-u$ containing $v_i$ is a path, i.e., belongs to $\mathcal{C}$.  By the argument of the previous paragraph, the members of $\mathcal{C}$ contain no cyclic blocks, and hence they correspond to antenna in $G$. Let $P_1$ and $P_2$ be the antenna of $G$ incident with $v_1$ and $v_2$, respectively. If we delete the vertices of $V(P_1) -\{v_1\}$ and $V(P_2)-\{v_2\}$ from $G$, neither $v_1$ nor $v_2$ is a cut vertex of the resulting graph.  By applying the Edge Gluing Lemma, we obtain a connected block graph with smaller mean CIS order than $G$.
\end{proof}

We close this section by describing a relationship between the mean sub-$k$-tree order of $k$-trees (see \cite{StephensOellermann2018}) and the mean CIS order of connected block graphs, and the implications of our main result in this setting.   The \emph{dual} $T'$ of a $k$-tree $T$ of order $n$ is defined to be the graph of order $n-k$ whose vertex set consists of the $(k+1)$-cliques in $T$, such that two vertices of $T'$ are adjacent if and only if the corresponding $(k+1)$-cliques in $T$ share a $k$-clique. It is not difficult to see that this dual of a nontrivial $k$-tree is a connected block graph. It was demonstrated in \cite{StephensOellermann2018} that there is a correspondence between the number of nontrivial sub-$k$-trees of a $k$-tree, and the number of connected induced subgraphs of its dual. Let $\mu(T)$ denote the mean order of the sub-$k$-trees of the $k$-tree $T$. Then the argument used in \cite{StephensOellermann2018}, for a certain type of $k$-tree, called a simple clique $k$-tree, can be used to show that for any $k$-tree $T$,
\[\mu(T) = \frac{W_{T'}}{N_{T'}+(n-k)k+1} +k.\]
 Since $T'$ has order $n-k$, it follows from our main result that $M_{T'} \ge M_{P_{n-k}} = \frac{n-k+2}{3}$. Moreover, $N_{T'} \ge N_{P_{n-k}}= \binom{n-k+1}{2}$. So, for fixed $k$, $\lim_{n\rightarrow \infty}\tfrac{\mu(T)}{M_{T'}} = 1$. In other words, $\mu(T) = M_{T'} + \mathrm{o}(n)$.  We conclude that understanding the behaviour of the mean CIS order of block graphs offers insight into the mean order of sub-$k$-trees of $k$-trees for large orders.

\section{The Local-Global Mean Inequality}\label{LocalGlobalSection}

In this section, we prove the local-global mean inequality for connected block graphs.  That is, we show that the local mean CIS order of a connected block graph $G$ at any vertex $v$ is greater than the (global) mean CIS order of $G$, extending the result of Jamison for trees~\cite[Theorem 3.9]{Jamison1983}.  We actually prove the stronger result that $M^*_{G,U}\geq M_G,$ where $U$ is either a single vertex, or the vertex set of a block of $G$.  In the process, we achieve several intermediate results which are used again later.

We begin by presenting a series of short lemmas that give inequalities between the number and/or total order of connected induced subgraphs of certain types in a given graph.  Some of these results hold for connected graphs in general.  First we note that \cite[Lemma 2.1]{WagnerWang2016} extends to the mean CIS order of connected graphs.  The proof is analogous to the proof of~\cite[Lemma 2.1]{WagnerWang2016}, and is omitted.

\begin{lemma}\label{WagLem}
Let $G$ be a connected graph and let $v$ be any vertex of $G$.  Then
$$W_{G, v}\leq\frac{N_{G, v}^2+N_{G, v}}{2},$$
with equality if and only if $G\cong P_n$ and $v$ is a leaf of $G$. \hfill\qed
\end{lemma}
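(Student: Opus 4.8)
The plan is to work directly with the coefficients of the local CIS polynomial at $v$. Write $b_i$ for the number of connected induced subgraphs of $G$ of order $i$ containing $v$, so that $N_{G,v}=\sum_{i=1}^n b_i$ and $W_{G,v}=\sum_{i=1}^n i\,b_i$, and the claim becomes $\sum_{i=1}^n i\,b_i\le\binom{N_{G,v}+1}{2}$. The first step is the structural input that $b_i\ge 1$ for every $i\in\{1,\dots,n\}$, with $b_1=1$. This holds because a connected graph admits a connected vertex ordering $v=w_1,w_2,\dots,w_n$ starting at $v$ — take any spanning tree rooted at $v$ and list each vertex after its parent — so that each prefix $\{w_1,\dots,w_j\}$ induces a connected subgraph of order $j$ containing $v$. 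In particular $N:=N_{G,v}\ge n$.

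For the inequality itself, I would set $c_i=b_i-1\ge 0$ and $C=\sum_i c_i=N-n$. A direct computation gives $W_{G,v}=\binom{n+1}{2}+\sum_{i=1}^n i\,c_i$ and $\binom{N+1}{2}=\binom{n+1}{2}+\tfrac{C(N+n+1)}{2}$. Since every index satisfies $i\le n\le N$, we have $\sum_i i\,c_i\le nC\le\tfrac{C(N+n+1)}{2}$, and the bound follows at once. This half is a one-line estimate, and equality forces $C=0$, i.e.\ $b_i=1$ for all $i$.

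The main work is the equality analysis, which I expect to be the crux. Assume $b_i=1$ for every $i$, so there is a unique connected induced subgraph $S_i$ of each order $i$ containing $v$. First I would show these nest into a chain $\{v\}=S_1\subset S_2\subset\cdots\subset S_n=G$: any connected induced subgraph $H\ni v$ of order $i+1$ contains a connected induced subgraph of order $i$ containing $v$ (delete from $H$ a non-cut vertex other than $v$, which exists since $H$ has at least two non-cut vertices), and by uniqueness this must be $S_i$. Next I would prove by induction that each $S_j$ is an induced path with $v$ as an endpoint. Writing $S_j=v=x_1x_2\cdots x_j$ and $S_{j+1}=S_j\cup\{y\}$, if $y$ were adjacent to some $x_i$ with $i<j$, then the vertex set $\{x_1,\dots,x_{j-1},y\}$ would induce a second connected induced subgraph of order $j$ containing $v$, distinct from $S_j$, contradicting $b_j=1$; hence $y$ attaches only at $x_j$, and $S_{j+1}$ is again an induced path with $v$ an endpoint. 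Taking $j=n$ yields $G\cong P_n$ with $v$ a leaf. The converse is immediate, since the connected induced subgraphs of $P_n$ containing a leaf are exactly its $n$ nested initial subpaths, for which $\sum_{i=1}^n i=\binom{n+1}{2}=\binom{N+1}{2}$.

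The main obstacle is precisely this equality characterization: the inequality is routine, but ruling out every non-path configuration requires combining the chain-nesting observation with the inductive path-growth argument, where the delicate point is using the uniqueness $b_j=1$ together with endpoint deletion to forbid any branching extension. I would present the lemma in this self-contained counting form, paralleling the coefficient-comparison strategy of Wagner and Wang.
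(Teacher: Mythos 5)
Your proof is correct: the structural input $b_i\ge 1$ for all $i\le n$ (hence $N_{G,v}\ge n$), the resulting one-line estimate, and the equality analysis via uniqueness, nesting of the $S_i$, and the forced path growth are all sound. The paper omits this proof, stating only that it is analogous to Wagner and Wang's argument for subtrees of trees, and your coefficient-comparison argument is exactly that adaptation, so you have simply written out in full the proof the paper points to.
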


We continue with three short lemmas whose proofs all use a similar technique.  Recall that $\mathcal{C}_{G}$ denotes the collection of connected induced subgraphs of $G$, and $\mathcal{C}_{G,v}$ denotes the collection of connected induced subgraphs of $G$ containing $v$. So $|\mathcal{C}_G|=N_G$ and $|\mathcal{C}_{G,v}|=N_{G,v}$.

\begin{lemma}\label{AdjacentVertexNumbers}
Let $G$ be a connected graph with adjacent vertices $u$ and $v$.  Then
\[
N_{G-v,u}\leq N_{G,v}-1,
\]
with equality if and only if $v$ is a leaf of $G$.
\end{lemma}

\begin{proof}
Define $f\colon\ \mathcal{C}_{G-v,u}\rightarrow \mathcal{C}_{G,v}$ by $f(H)=G[V(H)\cup\{v\}].$  One easily verifies that $f$ is well-defined and injective.  Further, the trivial graph on singleton vertex set $\{v\}$ is not in the image of $f$, hence
\[
N_{G-v,u}=|\mathcal{C}_{G-v,u}|\leq |\mathcal{C}_{G,v}|-1= N_{G,v}-1.
\]
If $v$ is a leaf of $G$ then every nontrivial connected induced subgraph of $G$ containing $v$ also contains $u$, and thus is in the image of $f$.  This gives equality.  Otherwise, $v$ has a neighbour $w\neq u$, and the nontrivial connected subgraph of $G$ induced by $\{v,w\}$ is not in the image of $f$, giving strict inequality.
\end{proof}

\begin{lemma}\label{NumCom}
Let $G$ be a connected block graph with non-cut vertex $v$.  Then
\[
N_{G, v}\leq N_{G-v}+1,
\]
with equality if and only if $G\cong K_n$.
\end{lemma}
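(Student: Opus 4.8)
The plan is to mimic the bijective technique used in Lemma~\ref{AdjacentVertexNumbers}, constructing an injection from the connected induced subgraphs containing $v$ into the connected induced subgraphs of $G-v$, with one extra element to absorb the trivial subgraph on $\{v\}$. The crucial structural fact I will exploit is that, since $v$ is a non-cut vertex of the block graph $G$, it lies in a unique block $B$, and $B$ is complete; hence $N_G(v)=V(B)\setminus\{v\}$ is a clique of $G$. This is precisely the property that distinguishes the block-graph setting, and it is where the hypothesis must enter.

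First I would observe that this clique property forces $H-v$ to be connected for every $H\in\mathcal{C}_{G,v}$ with $|V(H)|\geq 2$. Indeed, the neighbours of $v$ inside $H$ form a clique (being a subset of $N_G(v)$, and $H$ is induced), so $v$ cannot be a cut vertex of $H$: any path of $H$ passing through $v$ can be short-circuited across the edge joining its two neighbours of $v$, yielding a $v$-avoiding walk between any pair of vertices of $H-v$. Thus $f(H):=G[V(H)\setminus\{v\}]$ lands in $\mathcal{C}_{G-v}$, and this map is injective, since $H$ is recovered as $G[V(f(H))\cup\{v\}]$. As the trivial subgraph on $\{v\}$ is the only member of $\mathcal{C}_{G,v}$ of order less than $2$, this gives $N_{G,v}-1\leq N_{G-v}$, which is the desired inequality.

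For the equality case, I would first characterise the image of $f$: a subgraph $K\in\mathcal{C}_{G-v}$ lies in the image if and only if $G[V(K)\cup\{v\}]$ is connected, which (as $K$ is already connected) happens exactly when $V(K)$ meets $N_G(v)$. Hence $f$ is surjective, and equality holds, precisely when every connected induced subgraph of $G-v$ contains a neighbour of $v$. If $G\cong K_n$ this is immediate, since every vertex of $G-v$ is a neighbour of $v$. Conversely, if $G\not\cong K_n$, then $V(B)=N_G(v)\cup\{v\}$ is a proper subset of $V(G)$ (otherwise $G$ would equal the complete graph $B$), so there is a vertex $w\notin V(B)$; the single-vertex subgraph $G[\{w\}]\in\mathcal{C}_{G-v}$ contains no neighbour of $v$ and is therefore missed by $f$, giving strict inequality.

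I expect the only point genuinely requiring care is the well-definedness step, namely verifying that $H-v$ remains connected, since this is exactly where the block-graph hypothesis is used and is what fails for general graphs (for an arbitrary graph, $v$ could be an internal vertex of an induced path, so that $H-v$ disconnects). The remaining injectivity, image-description, and equality arguments are routine and parallel to the preceding lemmas.
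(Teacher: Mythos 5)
Your proof is correct and follows essentially the same route as the paper's: the same deletion map $H\mapsto H-v$ on nontrivial members of $\mathcal{C}_{G,v}$, the same use of the clique structure of $N_G(v)$ to establish well-definedness, and the same witness (a vertex non-adjacent to $v$) for strict inequality when $G\not\cong K_n$. Your explicit characterisation of the image of the map is a slight elaboration beyond what the paper writes, but the argument is the same.
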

\begin{proof}
Let $\mathcal{C}^+_{G,v}$ denote the collection of \emph{nontrivial} connected induced subgraphs of $G$ containing $v$.  Define a map $f\colon\ \mathcal{C}^+_{G,v}\rightarrow \mathcal{C}_{G-v}$ by $f(H)=H-v$. First we show that $f$ is well-defined.  Let $H\in \mathcal{C}^+_{G,v}.$  Since $\mathcal{C}^+_{G,v}$ contains only \emph{nontrivial} graphs, we note that $H-v$ has at least one vertex.  It remains to show that $H-v$ is connected.  Since $v$ is a non-cut vertex of $G$, and $G$ is a block graph, the open neighbourhood $N_H(v)$ induces a complete subgraph of $H$.  So the deletion of $v$ from $H$ does not separate any two neighbours of $v$.  Therefore, $H-v$ is connected and thus $f$ is well-defined.  Further, note that $f$ is injective, since if $f(H_1)=f(H_2)$, then $H_1$ and $H_2$ have the same vertex set.  Therefore,
\[
N_{G,v}=\left|\mathcal{C}^+_{G,v}\right|+1\leq \left|\mathcal{C}_{G-v}\right|+1=N_{G-v}+1.
\]
Equality is easily verified if $G\cong K_n$.  On the other hand, if $G\not\cong K_n$, then there is some vertex $u\in V(G)$ that is not adjacent to $v$.  Now $G[\{u\}]\in \mathcal{C}_{G-v},$ but $G[\{u,v\}]\not\in \mathcal{C}^*_{G,v},$ so we conclude that $f$ is not onto.  This gives strict inequality.
\end{proof}

\begin{lemma}\label{WeiNum}
Let $G$ be a connected graph of order $n \geq 2$ and let $u$ be any vertex of $G$.  Then
\[
N_G\leq W_{G, u}.
\]
\end{lemma}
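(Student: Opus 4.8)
The plan is to exhibit an injection from $\mathcal{C}_G$, the set of all connected induced subgraphs of $G$, into the set of incidence pairs counted by $W_{G,u}$. First I would rewrite the target quantity as a count of pairs: since $W_{G,u}=\sum_{H\in\mathcal{C}_{G,u}}|V(H)|$, expanding each order as a sum of $1$'s over the vertices of $H$ gives
\[
W_{G,u}=\bigl|\{(H,w):H\in\mathcal{C}_{G,u},\ w\in V(H)\}\bigr|.
\]
Thus it suffices to inject $\mathcal{C}_G$ into this set of pairs, which uses the connectedness of $G$ in an essential way.

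To build the injection, fix once and for all a canonical shortest $u$--$x$ path $P_x$ for every vertex $x$ (say the lexicographically least under a fixed vertex labelling, with $P_u$ trivial). Given $K\in\mathcal{C}_G$, let $w_K$ be the vertex of $K$ nearest to $u$ (breaking ties by the fixed labelling), let $P=P_{w_K}$, and set $H_K=G[V(K)\cup V(P)]$. Define $\phi(K)=(H_K,w_K)$. Since $K$ is connected, $P$ is a path, and they meet at $w_K$, the induced graph $H_K$ is connected; it contains $u$ (an endpoint of $P$) and $w_K\in V(K)$, so $(H_K,w_K)$ is a legitimate pair counted by $W_{G,u}$.

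The crux is injectivity, which I would establish by exhibiting an explicit left inverse. The key geometric observation is that every internal vertex of $P=P_{w_K}$ lies strictly closer to $u$ than $w_K$, whereas $w_K$ realizes the minimum distance from $u$ to $V(K)$; hence no internal vertex of $P$ belongs to $K$, giving the disjointness $V(P)\cap V(K)=\{w_K\}$ and the partition $V(H_K)=V(K)\,\sqcup\,(V(P)\setminus\{w_K\})$. Now from any pair $(H,w)$ in the image, the path $P_w$ is determined by $w$ alone (this is exactly why the canonical choice of $P_x$ must not depend on $K$), so I can recover $V(K)=V(H)\setminus(V(P_w)\setminus\{w\})$ and hence $K=G[V(K)]$. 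This shows $\phi$ is injective, whence $N_G=|\mathcal{C}_G|\leq W_{G,u}$.

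The main obstacle is precisely this injectivity argument: the natural idea of ``attaching a shortest path from $K$ back to $u$'' is only well defined and invertible once one (i) fixes a canonical shortest path for each vertex so that $P$ depends on $w$ but not on $K$, and (ii) takes $w_K$ to be the closest vertex of $K$ to $u$, which is exactly the condition forcing $P$ and $K$ to overlap in the single vertex $w_K$ and thereby allowing $K$ to be peeled off again. As a sanity check, for $G\cong P_n$ with $u$ a leaf the map is a bijection, matching $N_{P_n}=W_{P_n,u}=\binom{n+1}{2}$; this equality also explains why the lemma asserts only the inequality, with no characterization of the extremal case needed here.
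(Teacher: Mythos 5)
Your proof is correct and follows essentially the same route as the paper's: both reinterpret $W_{G,u}$ as the number of pairs $(H,w)$ with $H\in\mathcal{C}_{G,u}$ and $w\in V(H)$, and both inject $\mathcal{C}_G$ into this set by attaching a canonically chosen shortest path from $u$ to the closest vertex of the subgraph, with injectivity established exactly as you describe (the shortest path meets the subgraph only in that closest vertex, so the subgraph can be peeled off again). No substantive differences.
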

\begin{proof}
Let
\[
\mathcal{D}_{G,u}=\{(X,x)\colon\ X\in\mathcal{C}_{G,u}, x\in V(X)\}.
\]
Observe that $|\mathcal{D}_{G,u}|=W_{G,u}$.  Therefore, it suffices to show that there is an injective function $f\colon\ \mathcal{C}_G\rightarrow \mathcal{D}_{G,u}$.
		
Assign a fixed ordering to the vertices of $G$.  For each vertex $v\in V(G)$, assign a fixed ordering to the shortest $u$--$v$ paths in $G$.  Among all vertices of $H$ closest to $u$, let $x_H$ be the vertex that appears first in the given ordering.
Let $P_H$ be the first path in the fixed ordering of shortest $u$--$x_H$ paths (if $H$ contains $u$ then $P_H$ consists of the single vertex $u$). Let $X_H$ be the subgraph of $G$ induced by $V(H) \cup V(P_H)$.  Clearly $X_H\in \mathcal{C}_{G,u}$. Define $f(H) = (X_H, x_H)$. Suppose $f(H_1)=f(H_2)=(X,x)$.  Let $P$ be the first shortest $u$--$x$ path in the fixed ordering of $u$--$x$ paths.  So $X=G[V(H_1)\cup V(P)]=G[V(H_2)\cup V(P)]$.  Note that no vertex of $P$ other than $x$ can lie in $H_1$ or $H_2$, since $P$ is a shortest path from $u$ to $H_1$, and from $u$ to $H_2$.  Therefore,
\[
H_1=X-(V(P)-x)=H_2.
\]
Thus, we have shown that $f$ is injective, and the desired conclusion follows.
\end{proof}

To prove the next lemma, we use an inductive argument and several lemmas already proven in this section.

\begin{lemma}\label{WeightBound}
Let $G$ be a connected block graph of order $n\geq 1$, and let $v$ be a vertex of $G$.  Then
\[
W_{G-v} \leq \frac{N_{G,v}N_{G-v}}{2},
\]
which is equivalent to
\[
M_{G-v}\leq \frac{N_{G,v}}{2}
\]
when $n\geq 2$.
\end{lemma}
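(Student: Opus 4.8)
The plan is to establish the weight form $W_{G-v}\le \tfrac12 N_{G,v}N_{G-v}$ by strong induction on $n=|V(G)|$; dividing through by $N_{G-v}>0$ (valid once $n\ge 2$) then yields the equivalent mean form $M_{G-v}\le N_{G,v}/2$. The base case $n=1$ is immediate, since $G-v$ is empty and both sides equal $0$. For the inductive step I would distinguish two cases according to whether $v$ is a cut vertex of $G$.

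Suppose first that $v$ is \emph{not} a cut vertex, so that $G-v$ is a connected block graph of order $n-1$, and fix a neighbour $u$ of $v$ in $G$ (one exists since $G$ is connected and $n\ge 2$). Partitioning $\mathcal{C}_{G-v}$ according to whether a subgraph contains $u$ gives $W_{G-v}=W_{G-v,u}+W_{(G-v)-u}$ and $N_{G-v}=N_{G-v,u}+N_{(G-v)-u}$. The central observation is that Lemma~\ref{AdjacentVertexNumbers}, applied to the adjacent pair $u,v$ in $G$, gives $N_{G-v,u}+1\le N_{G,v}$, so that the single factor $N_{G,v}/2$ will control both parts. Indeed, applying Lemma~\ref{WagLem} to $G-v$ at $u$ gives $W_{G-v,u}\le \tfrac12 N_{G-v,u}(N_{G-v,u}+1)\le \tfrac12 N_{G-v,u}N_{G,v}$, while the inductive hypothesis applied to $(G-v,u)$ gives $W_{(G-v)-u}\le \tfrac12 N_{G-v,u}N_{(G-v)-u}\le \tfrac12 N_{G,v}N_{(G-v)-u}$. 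Adding these and using the additivity of $N_{G-v}$ yields $W_{G-v}\le \tfrac12 N_{G,v}\bigl(N_{G-v,u}+N_{(G-v)-u}\bigr)=\tfrac12 N_{G,v}N_{G-v}$, as required.

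Now suppose $v$ \emph{is} a cut vertex, and let $H_1,\dots,H_k$ (with $k\ge 2$) be the components of $G-v$, writing $G_i=G[V(H_i)\cup\{v\}]$ as in Lemma~\ref{LocalSum}. Each $G_i$ has order strictly less than $n$, so the inductive hypothesis gives $W_{H_i}=W_{G_i-v}\le \tfrac12 N_{G_i,v}N_{H_i}$; summing over $i$ and using $W_{G-v}=\sum_i W_{H_i}$ gives $W_{G-v}\le \tfrac12\sum_i N_{G_i,v}N_{H_i}$. To compare this with $\tfrac12 N_{G,v}N_{G-v}$, I would invoke the counting identity from the proof of Lemma~\ref{LocalSum}, which at $x=1$ yields $N_{G,v}=\prod_i N_{G_i,v}$, together with $N_{G-v}=\sum_i N_{H_i}$. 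Since every factor $N_{G_j,v}\ge 1$, we have $N_{G_i,v}\le\prod_j N_{G_j,v}=N_{G,v}$ for each $i$, whence $\sum_i N_{G_i,v}N_{H_i}\le N_{G,v}\sum_i N_{H_i}=N_{G,v}N_{G-v}$, completing this case. I expect the non-cut case to be the crux: the delicate point is forcing both halves of the $u$-partition to share the common bound $N_{G,v}/2$, which is made possible precisely by the matching of the $+1$ in Lemma~\ref{AdjacentVertexNumbers} with the extra factor $N_{G-v,u}+1$ produced by Lemma~\ref{WagLem}. By contrast, the cut-vertex case should reduce to the routine product-versus-sum estimate above once the multiplicativity $N_{G,v}=\prod_i N_{G_i,v}$ is in hand.
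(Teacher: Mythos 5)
Your proof is correct, and its skeleton matches the paper's: induction on $n$, a case split on whether $v$ is a cut vertex, and, in the crux non-cut case, the same partition of $\mathcal{C}_{G-v}$ at a neighbour $u$ combined with Lemma~\ref{WagLem}, the inductive hypothesis applied to $G-v$ at $u$, and Lemma~\ref{AdjacentVertexNumbers}. The differences are in the bookkeeping, and they are mildly in your favour. In the non-cut case the paper first collapses the two bounds into $W_{G-v}\le \tfrac12 N_{G-v,u}(N_{G-v}+1)$, then replaces $N_{G-v,u}$ by $N_{G,v}-1$ and must invoke Lemma~\ref{NumCom} (i.e., $N_{G,v}\le N_{G-v}+1$, valid since $v$ is not a cut vertex) to absorb the resulting cross terms; by instead applying $N_{G-v,u}+1\le N_{G,v}$ to the Wagner--Wang term and $N_{G-v,u}\le N_{G,v}$ to the inductive term separately, you reach $\tfrac12 N_{G,v}\bigl(N_{G-v,u}+N_{(G-v)-u}\bigr)=\tfrac12 N_{G,v}N_{G-v}$ directly, so Lemma~\ref{NumCom} is never needed. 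In the cut-vertex case the paper passes to the mean form, using the convex-combination observation $M_{G-v}\le M_H$ for a best component $H$ together with the inductive hypothesis for $G[V(H)\cup\{v\}]$ at $v$; your additive argument, using $W_{G-v}=\sum_i W_{H_i}$, the inductive hypothesis for each $G_i$, and the product formula $N_{G,v}=\prod_i N_{G_i,v}\ge N_{G_i,v}$ from the proof of Lemma~\ref{LocalSum}, is equally valid and stays in the weight form throughout. Both routes prove the lemma; yours is marginally more self-contained.
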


\begin{proof}
We proceed by induction on $n$.  If $n=1$, then the statement holds trivially since $W_{G-v}=0$.  Now let $n\geq 2$ and suppose that the statement holds for all graphs of order less than $n$.  First of all, if $v$ is a cut vertex of $G$, then $M_{G-v}\leq M_H$, where $H$ is a component of $G-v$ of largest mean CIS order.  Let $H'=G[V(H)\cup \{v\}]$.  By the induction hypothesis applied to $H'$ at $v$, we have
\[
M_{G-v}\leq M_{H}=M_{H'-v}\leq \frac{N_{H',v}}{2}<\frac{N_{G,v}}{2}.
\]

So we may assume that $v$ is  not a cut vertex of $G$.  Let $u$ be a neighbour of $v$ in $G$, and write
\begin{align}\label{WeightDecomp}
W_{G-v}=W_{G-v,u}+W_{G-\{u,v\}}.
\end{align}
By Lemma \ref{WagLem}, we have
\[
W_{G-v,u}\leq \frac{N^2_{G-v,u}+N_{G-v,u}}{2},
\]
and by the induction hypothesis applied to $G-v$ at $u$, we have
\[
W_{G-\{u,v\}}\leq \frac{N_{G-v,u}N_{G-\{u,v\}}}{2}.
\]
Substituting these inequalities into~(\ref{WeightDecomp}), we obtain
\begin{align*}
W_{G-v} &\leq \frac{N^2_{G-v,u}+N_{G-v,u}}{2}+\frac{N_{G-v,u}N_{G-\{u,v\}}}{2}\\
&=\frac{N_{G-v,u}\left[N_{G-v,u}+1+N_{G-\{u,v\}}\right]}{2}\\
&=\frac{N_{G-v,u}\left[N_{G-v}+1\right]}{2}.
\end{align*}
Since $u$ and $v$ are adjacent in $G$, we may apply Lemma \ref{AdjacentVertexNumbers}, which gives
\begin{align*}
W_{G-v}&\leq \frac{\left[N_{G,v}-1\right]\left[N_{G-v}+1\right]}{2}\\
&=\frac{N_{G,v}N_{G-v}+N_{G,v}-N_{G-v}-1}{2}\\
&\leq \frac{N_{G,v}N_{G-v}}{2},
\end{align*}
as $N_{G, v}-N_{G-v}-1\leq 0$ by Lemma \ref{NumCom} (which applies since $v$ is not a cut vertex of $G$).
\end{proof}

For the next lemma, we introduce some new notation.  For a connected block graph $G$ of order at least $2$ with vertex $v$, define
\[
\mu_{G,v}=\frac{W_{G,v}-N_{G,v}}{N_{G,v}-1}.
\]
Note that $\mu_{G,v}$ is the average order of the connected induced subgraphs of $G-v$ containing at least one neighbour of $v$.  That is, $\mu_{G,v}=M^*_{G-v,N_G(v)}$.  In the next lemma, we show that if $v$ is not a cut vertex, then $\mu_{G,v}$ is at least as large as $M_{G-v}$.  We obtain our extension of the local-global mean inequality as a straightforward consequence of this result.

\begin{lemma}\label{MuLem}
Let $G$ be a connected block graph of order $n\geq 2$ with non-cut vertex $v$.
Then
\[
\mu_{G,v}\geq M_{G-v},
\]
with equality if and only if $G\cong K_n$.
\end{lemma}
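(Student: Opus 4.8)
The plan is to induct on the order $n$, proving $\mu_{G,v}\ge M_{G-v}$ (with equality exactly when $G\cong K_n$) for every non-cut vertex $v$, and to reduce the entire statement to a single auxiliary inequality that is controlled by Lemma~\ref{WagLem}. First I would set $H=G-v$ and $U=N_G(v)$. Since $v$ is not a cut vertex, it lies in a unique block $B$ and $U=V(B)\setminus\{v\}$; a short block-graph argument shows that when $|U|\ge 2$ the set $U$ is the vertex set of a block $B'$ of $H$ (no vertex outside $U$ can be adjacent to two vertices of $U$ without itself lying in $B$). Recalling the identity $\mu_{G,v}=M^*_{G-v,U}=M^*_{H,U}$ noted just before the lemma, I would apply Lemma~\ref{LocalBlockLemma} to $H$ at the block $B'$ to obtain
\[
\mu_{G,v}=\frac{N^*_{H,U}+1}{N^*_{H,U}}\sum_{i=1}^k\frac{W_{H_i,u_i}}{N_{H_i,u_i}+1},
\]
where $H_i$ is the component of $H-E(B')$ containing $u_i$. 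The degenerate case $|U|=1$ (that is, $v$ a leaf) is handled directly, since then $\mu_{G,v}=M_{H,u}$.

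The reduction step comes next. Partitioning $\mathcal{C}_H$ according to whether a subgraph meets $U$ expresses $M_H=M_{G-v}$ as a convex combination of $M^*_{H,U}=\mu_{G,v}$ and $M_{H-U}$; moreover $H-U$ is the disjoint union of the graphs $H_i-u_i$, so $M_{H-U}$ is itself a convex combination of the $M_{H_i-u_i}$. Hence it suffices to prove $\mu_{G,v}\ge M_{H_i-u_i}$ for each branch with $H_i\neq\{u_i\}$ (if every branch is trivial, then $B=G$ and $G\cong K_n$, which is the equality case). Dropping the nonnegative summands and using $\tfrac{N^*_{H,U}+1}{N^*_{H,U}}>1$ in the displayed formula yields the strict bound $\mu_{G,v}>\frac{W_{H_i,u_i}}{N_{H_i,u_i}+1}$, so it remains to establish the auxiliary inequality $\frac{W_{J,u}}{N_{J,u}+1}\ge M_{J-u}$ for every connected block graph $J$ and vertex $u$ (applied with $J=H_i$ and $u=u_i$).

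The auxiliary inequality is the technical heart. When $u$ is not a cut vertex of $J$, Lemma~\ref{WagLem} gives $W_{J,u}\le\tfrac12\!\left(N_{J,u}^2+N_{J,u}\right)$, which rearranges to $\frac{W_{J,u}}{N_{J,u}+1}\ge\frac{W_{J,u}-N_{J,u}}{N_{J,u}-1}=\mu_{J,u}$, and since $J$ has order smaller than $n$ the induction hypothesis gives $\mu_{J,u}\ge M_{J-u}$. When $u$ is a cut vertex of $J$ with branches $D_1,\dots,D_t$, where $D_j^+=J[V(D_j)\cup\{u\}]$ has $u$ as a non-cut vertex, I would combine three facts: $M_{J-u}\le\max_j M_{D_j}$ by convexity; the non-cut case just proved gives $M_{D_j}\le\frac{W_{D_j^+,u}}{N_{D_j^+,u}+1}$; and the product formula behind Lemma~\ref{LocalSum}, together with $M_{D_l^+,u}\ge 1$ and $N_{J,u}\ge N_{D_j^+,u}$, yields the monotonicity $\frac{W_{D_j^+,u}}{N_{D_j^+,u}+1}\le\frac{W_{J,u}}{N_{J,u}+1}$. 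Chaining these proves the auxiliary inequality for cut vertices as well, and closes the induction.

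I expect the main obstacle to be this cut-vertex case of the auxiliary inequality, and specifically the monotonicity of $\frac{W_{J,u}}{N_{J,u}+1}$ under attaching additional branches at $u$; the non-cut case, by contrast, is essentially immediate from Lemma~\ref{WagLem}. The remaining care is bookkeeping for the equality statement: strictness whenever $G\not\cong K_n$ is delivered by the factor $\tfrac{N^*_{H,U}+1}{N^*_{H,U}}>1$, which forces $\mu_{G,v}$ to strictly exceed each $M_{H_i-u_i}$ as soon as a single nontrivial branch exists.
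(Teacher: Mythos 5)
Your proof is correct, and its skeleton matches the paper's: both write $M_{G-v}$ as a convex combination of $\mu_{G,v}=M^*_{G-v,U}$ and the mean over subgraphs avoiding $U$, both obtain the strict bound $\mu_{G,v}>\sum_i W_{H_i,u_i}/(N_{H_i,u_i}+1)$ from Lemma~\ref{LocalBlockLemma}, and both handle a branch vertex $u_i$ that is a cut vertex of its branch via the same monotonicity of $W/(N+1)$ derived from Lemma~\ref{LocalSum} (the paper does this by passing to $H_1=G[V(H)\cup\{v_1\}]$ rather than by a separate case split, and it inducts on the number of blocks rather than on the order, but these are cosmetic differences). The one substantive divergence is how the auxiliary inequality $\frac{W_{J,u}}{N_{J,u}+1}\geq M_{J-u}$ is closed for a non-cut vertex $u$: you apply Lemma~\ref{WagLem} first, observing that $W_{J,u}\leq\frac{1}{2}\left(N_{J,u}^2+N_{J,u}\right)$ is equivalent to $\frac{W_{J,u}}{N_{J,u}+1}\geq\mu_{J,u}$, and only then the induction hypothesis $\mu_{J,u}\geq M_{J-u}$; the paper applies the induction hypothesis first and then needs $\frac{M_{J-u}(N_{J,u}-1)+N_{J,u}}{N_{J,u}+1}\geq M_{J-u}$, which is equivalent to $W_{J-u}\leq\frac{1}{2}N_{J,u}N_{J-u}$, i.e., Lemma~\ref{WeightBound}. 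Your order of operations therefore bypasses Lemma~\ref{WeightBound} entirely (a lemma that requires its own inductive proof, though the paper still needs it for the Stretching Lemma) --- a modest but genuine economy. One phrasing to fix: the case $|U|=1$ is not handled ``directly''; it is exactly the instance $k=1$ of your reduction and still requires the auxiliary inequality applied to $J=G-v$ at its unique vertex of $U$, since the claim there amounts to the full local--global inequality for $G-v$ rather than anything trivial.
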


\begin{proof}
We proceed by induction on the number of blocks $k$ of $G$.  If $k=1$, then $G\cong K_n$, and we verify that
\[
\mu_{G,v}=M_{G-v}.
\]
Now let $k>1$, and suppose that the statement holds for all connected block graphs of order at least $2$, with less than $k$ blocks.  Since $v$ is not a cut vertex, it is contained in only one block $B$ of $G$, and $N_G(v)=V(B)-v$.  Since we can partition the connected induced subgraphs of $G-v$ into all those that contain at least one neighbour of $v$ and all those that do not, we can write $M_{G-v}$ as a convex combination of $\mu_{G,v}$ and $M_{G-B}$.  So it suffices to show that $\mu_{G,v}\geq M_{G-B}.$
	
Let $U=V(B)-v=\{v_1,v_2,\dots, v_k\}$.  Note that $U$ is either a singleton, or induces a block in $G-v$.  For each $i\in\{1,\dots,k\}$, let $G_i$ be the connected component of $(G-v)-E(B)$ containing $v_i$.  By Lemma~\ref{LocalBlockLemma},
\[
\mu_{G,v}=M^*_{G-v,U}=\frac{N^*_{G-v,U}+1}{N^*_{G-v,U}}\sum_{i=1}^k \frac{W_{G_i,v_i}}{N_{G_i,v_i}+1}>\sum_{i=1}^k \frac{W_{G_i,v_i}}{N_{G_i,v_i}+1}.
\]
	
Now $G-B$ may be a disconnected graph, so $M_{G-B}$ is at most $M_H$, where $H$ is a connected component of $G-B$ of largest mean CIS order.  In particular, $H$ is a subgraph of $G_i-v_i$ for some $i\in\{1,\dots,k\}$.  Without loss of generality, suppose $H$ is a subgraph of $G_1-v_1$.  Let $H_1=G[V(H)\cup\{v_1\}]$.  By Lemma~\ref{LocalSum}, we have
\[
M_{G_1,v_1}\geq M_{H_1,v_1},
\]
and together with $W_{G_1,v_1}\geq W_{H_1,v_1},$ which clearly holds since every connected induced subgraph of $H_1$ containing $v_1$ is a connected induced subgraph of $G_1$ containing $v_1$, this implies
\[
\frac{W_{G_1,v_1}}{1+N_{G_1,v_1}}\geq \frac{W_{H_1,v_1}}{1+N_{H_1,v_1}}.
\]
Altogether, we have
\[
\mu_{G,v}>\frac{W_{G_1,v_1}}{1+N_{G_1,v_1}}\geq \frac{W_{H_1,v_1}}{1+N_{H_1,v_1}}=\frac{\mu_{H_1,v_1}(N_{H_1,v_1}-1)+N_{H_1,v_1}}{1+N_{H_1,v_1}}.
\]
Note that $H_1$ has order at least $2$, and fewer than $k$ blocks, and that $v_1$ is a non-cut vertex of $H_1$.  Hence, by the induction hypothesis, applied to $H_1$ at $v_1$,
\[
\mu_{G,v}> \frac{M_{H_1-v_1}(N_{H_1,v_1}-1)+N_{H_1,v_1}}{1+N_{H_1,v_1}}.
\]
Finally,
\[
\frac{M_{H_1-v_1}(N_{H_1,v_1}-1)+N_{H_1,v_1}}{1+N_{H_1,v_1}}\geq M_{H_1-v_1}
\]
is equivalent to
\[
W_{H_1-v_1}\leq \frac{N_{H_1,v_1}N_{H_1-v_1}}{2},
\]
which holds, by Lemma \ref{WeightBound}.  So we have shown that
\[
\mu_{G,v}> M_{H_1-v_1}=M_H\geq M_{G-B},
\]
and it follows, from our earlier observation, that $\mu_{G,v}> M_{G-v}.$
\end{proof}

\begin{theorem}[The Local-Global Mean Inequality]\label{LocalGlobal}
Let $G$ be a connected block graph.
\begin{enumerate}
\item \label{CorA} If $v$ is a vertex of $G$, then $M_{G}\leq M_{G,v}$.
\item \label{CorB} If $B$ is a block of $G$, then $M_{G}\leq M^*_{G,B}$.
\end{enumerate}
\end{theorem}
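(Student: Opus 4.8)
The plan is to derive both statements from Lemma~\ref{MuLem} by a single augmentation trick, recognising that parts~\ref{CorA} and~\ref{CorB} are the two cases (singleton versus full block) of the uniform assertion $M^*_{G,U}\geq M_G$, where $U\subseteq V(G)$ induces a complete subgraph. The point is that Lemma~\ref{MuLem} already packages the hard estimates (Lemmas~\ref{WeightBound} and~\ref{LocalBlockLemma}, together with the reduction to a largest-mean branch), so once the local-global inequality is phrased in terms of $\mu$, essentially no new work remains.

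Concretely, I would set $U=\{v\}$ for part~\ref{CorA} and $U=V(B)$ for part~\ref{CorB}; in either case $G[U]$ is complete, since a block of a block graph is complete. I would then form the graph $G^+$ from $G$ by adding a single new vertex $w$ joined to every vertex of $U$. Because $U$ induces a complete subgraph, $U\cup\{w\}$ induces a complete subgraph of $G^+$, which is a block of $G^+$ (a pendant $K_2$ when $U=\{v\}$, and the enlarged block $G^+[V(B)\cup\{w\}]$ when $U=V(B)$); hence $G^+$ is again a connected block graph. Moreover $G^+-w=G$ is connected, so $w$ is a non-cut vertex of $G^+$, and $G^+$ has order at least $2$.

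Applying Lemma~\ref{MuLem} to $G^+$ at the non-cut vertex $w$ gives $\mu_{G^+,w}\geq M_{G^+-w}=M_G$. The final step is to read off $\mu_{G^+,w}$: by definition it is the mean order of the connected induced subgraphs of $G^+-w=G$ that contain at least one neighbour of $w$, and the neighbours of $w$ are exactly the vertices of $U$, so $\mu_{G^+,w}=M^*_{G,U}$. Thus $M^*_{G,U}\geq M_G$. For part~\ref{CorA} I then use $M^*_{G,\{v\}}=M_{G,v}$ (the singleton case noted after Lemma~\ref{LocalBlockLemma}) to conclude $M_G\leq M_{G,v}$, and for part~\ref{CorB} the equality $M^*_{G,V(B)}=M^*_{G,B}$ gives $M_G\leq M^*_{G,B}$.

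The only genuine obstacle is conceptual rather than computational: spotting that attaching $w$ to $U$ turns the target quantity $M^*_{G,U}$ into an instance of $\mu$, for which Lemma~\ref{MuLem} is exactly the required inequality. The routine verifications --- that $G^+$ stays a block graph and that $w$ is non-cut --- are immediate from $G[U]$ being complete. If one preferred to avoid augmentation, part~\ref{CorA} can instead be proved directly: writing $M_G$ as a convex combination of $M_{G,v}$ and $M_{G-v}$ reduces the claim to $M_{G-v}\leq M_{G,v}$, which for non-cut $v$ follows by combining $\mu_{G,v}\geq M_{G-v}$ (Lemma~\ref{MuLem}) with $M_{G-v}\leq N_{G,v}/2$ (Lemma~\ref{WeightBound}) after rewriting $M_{G,v}=1+\tfrac{(N_{G,v}-1)\mu_{G,v}}{N_{G,v}}$, and for cut $v$ follows from Lemma~\ref{LocalSum}; but the augmentation argument is cleaner and settles both parts at once.
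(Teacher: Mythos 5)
Your proposal is correct and is essentially identical to the paper's own proof: the paper likewise proves part~\ref{CorA} by attaching a pendant leaf at $v$ and part~\ref{CorB} by attaching a new vertex joined to all of $B$, then applies Lemma~\ref{MuLem} at the new (non-cut) vertex, whose $\mu$-value is exactly $M_{G,v}$ respectively $M^*_{G,B}$. Your only addition is to phrase the two augmentations uniformly as joining $w$ to a clique $U$, which is a harmless repackaging of the same argument.
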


\begin{proof}
For \ref{CorA}, let $G'$ be the graph obtained from $G$ by adding a new leaf vertex $u$ to $v$.  Note that $\mu_{G',u}=M_{G,v}.$  Thus, by Lemma~\ref{MuLem},
\[
M_{G,v}=\mu_{G',u}\geq M_{G'-u}=M_{G}.
\]

For \ref{CorB}, let $G'$ be the graph obtained from $G$ by adding a new vertex $u$ and joining it to all vertices of $B$.  Note that $\mu_{G',u}=M^*_{G,B}.$  Thus by Lemma~\ref{MuLem},
\[
M^*_{G,B}=\mu_{G',u}\geq M_{G'-u}=M_G.  \qedhere
\]
\end{proof}

\section{The Vertex Gluing Lemma, the Edge Gluing Lemma, and the Stretching Lemma} \label{KeyLemmas}

In this section, we prove the Vertex Gluing Lemma, the Edge Gluing Lemma, and the Stretching Lemma.  The Vertex Gluing Lemma extends the Gluing Lemma of~\cite{MolOellermann2017} from trees to connected block graphs, and the proof is very similar once the local-global mean inequality is established for connected block graphs.  The proof is included in Appendix~\ref{Appendix} for completeness.

\begin{lemma}[The Vertex Gluing Lemma] \label{vertex_gluing}
Let $H$ be a connected block graph of order at least $2$ having vertex $v$.  Fix a natural number $n\geq 3$.  Let $P:u_1\dots u_n$ be a path of order $n$. For $s \in \{1, \dots, n\}$, let $G_s$ be the block graph obtained from the disjoint union of $P_n$ and $H$ by gluing $v$ to $u_s$.  If $1\leq i<j\leq \tfrac{n+1}{2}$, then $M_{G_i}<M_{G_j}.$  \hfill \qed
\end{lemma}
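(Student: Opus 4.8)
The plan is to set up a generating-function comparison between consecutive graphs $G_s$ and $G_{s+1}$ in the family, reducing the strict monotonicity claim to a single inequality that can be attacked with the local-global mean inequality just established. Writing $H_v := H$ with its distinguished vertex $v$, the graph $G_s$ is $P_n$ with the block graph $H$ attached at the internal path-vertex $u_s$. I would express $\Phi_{G_s}(x)$ by partitioning the connected induced subgraphs of $G_s$ according to how they interact with the attachment vertex $u_s$: those avoiding $u_s$ split across the left path-segment $u_1\dots u_{s-1}$, the right segment $u_{s+1}\dots u_n$, and $H-v$; those containing $u_s$ are counted by a local polynomial $\Phi_{G_s,u_s}(x)$, which by the product formula of Lemma~\ref{LocalSum} factors as $x^{-2}\Phi_{L_s}(x)\Phi_{R_s}(x)\Phi_{H,v}(x)$, where $L_s$ and $R_s$ are the two path-pieces rooted at $u_s$ (here I am treating $u_s$ as a cut vertex with three branches). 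The point of this bookkeeping is to isolate exactly how $N_{G_s}$ and $W_{G_s}$ change when the attachment point moves one step, say from $u_s$ to $u_{s+1}$.

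The core of the argument is the comparison $M_{G_{s+1}}-M_{G_s}>0$ for $1\le s<\tfrac{n+1}{2}$, i.e.\ while the attachment vertex is still in the left half of the path. Since $M_{G_s}=W_{G_s}/N_{G_s}$, the sign of the difference is controlled by $W_{G_{s+1}}N_{G_s}-W_{G_s}N_{G_{s+1}}$, and I would compute $N_{G_{s+1}}-N_{G_s}$ and $W_{G_{s+1}}-W_{G_s}$ directly from the decomposition above. Moving the attachment from $u_s$ to $u_{s+1}$ lengthens the left path-branch by one vertex and shortens the right branch by one; because $s<\tfrac{n+1}{2}$, the left branch is the shorter of the two, so this operation moves $H$ toward the path's center. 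Intuitively, attaching $H$ nearer the center increases the mean CIS order, and the job is to make this precise. I expect the cleanest route is to invoke the Gluing Lemma from~\cite{MolOellermann2017} only as structural motivation and instead mimic its proof: write $M_{G_s}$ as a convex combination (using the partition principle quoted after Lemma~\ref{LocalBlockLemma}) of the mean orders of subgraph classes, and show that the classes whose weight increases when passing from $G_s$ to $G_{s+1}$ have mean order exceeding $M_{G_s}$, while those whose weight decreases have mean order below it.

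The step where the local-global inequality becomes essential is in controlling the ``local'' contributions: the subgraphs containing $u_s$ (respectively $u_{s+1}$) have mean order $M_{G_s,u_s}$, and Theorem~\ref{LocalGlobal}\ref{CorA} guarantees $M_{G_s,u_s}\ge M_{G_s}$, so shifting probability mass toward these central, $H$-containing subgraphs can only raise the global mean provided the asymmetry of the two path-branches is accounted for with the correct sign. I anticipate the main obstacle is precisely this sign bookkeeping: one must verify that for $s<\tfrac{n+1}{2}$ the net effect of simultaneously growing the short branch and shrinking the long branch is strictly positive, which reduces to a comparison of the quantities $W_{L_s}N_{R_s}$-type cross terms for paths of unequal length. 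Because the path CIS polynomials $\Phi_{P_m}(x)$ are explicit (giving $N_{P_m}=\binom{m+1}{2}$ and $W_{P_m}=\binom{m+2}{3}$), this obstacle is ultimately a finite, if delicate, algebraic inequality in $s$ and $n$ together with the fixed data $N_{H,v}$ and $W_{H,v}$; the role of $H$ enters only through these two nonnegative parameters, and the monotonicity should hold uniformly in them. Since the lemma is marked with \qed and deferred to the appendix, I would keep the body's proof to this reduction and the convexity argument, relegating the explicit path computations to the appendix.
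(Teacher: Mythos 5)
Your outline tracks the paper's actual proof quite closely: the same three-way partition of the connected induced subgraphs of $G_s$ (those in $P_n$ avoiding $u_s$, those in $H$ avoiding $v$, and those through the glued vertex, counted by $\Phi_{P_n,u_s}(x)\Phi_{H,v}(x)/x$), the same reduction of the monotonicity claim to the sign of a $W\cdot N'-W'\cdot N$ expression (the paper treats $s$ as a continuous variable and differentiates, which is only cosmetically different from your consecutive-difference comparison), and the same ultimate reliance on the local-global inequality. The problem is that the one step you defer --- the ``delicate algebraic inequality'' that you assert ``should hold uniformly'' --- is the entire content of the lemma, and your plan for it contains two concrete errors. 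First, $H$ does \emph{not} enter only through $N_{H,v}$ and $W_{H,v}$: the subgraphs of $H$ avoiding $v$ contribute $N_H-N_{H,v}$ and $W_H-W_{H,v}$ to the global counts, and after factoring $n-2s+1$ out of the derivative the residual expression (which is independent of $s$) contains the cross term $N_HW_{H,v}-W_HN_{H,v}$ coupling all four parameters. That term is precisely where the local-global inequality is needed, and it is applied to $H$ at $v$ (i.e.\ $M_{H,v}\geq M_H$), not to $G_s$ at $u_s$ as you propose; you give no argument that $M_{G_s,u_s}\geq M_{G_s}$ can be converted into the inequality actually required. One also needs $(W_H-N_H)-(W_{H,v}-N_{H,v})\geq 0$, which the paper proves by a term-by-term comparison of the coefficient sequences of $\Phi_H$ and $\Phi_{H,v}$; nothing in your plan produces this.

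Second, the convex-combination mechanism you sketch does not go through as stated. When the attachment point moves from $u_s$ to $u_{s+1}$, the class of path subgraphs avoiding the attachment vertex changes both its total weight \emph{and} its mean order (it is a different set of subgraphs, built from $P_{s-1}\cup P_{n-s}$ versus $P_{s}\cup P_{n-s-1}$), so you cannot argue purely by ``mass shifts toward classes whose mean exceeds $M_{G_s}$''; you would have to control the change in the mean of the avoiding class as well, which puts you right back at the explicit computation with $N_{P_m}=\binom{m+1}{2}$, $W_{P_m}=\binom{m+2}{3}$, $\Phi_{P_n,u_s}(1)=s(n-s+1)$, and $\Phi'_{P_n,u_s}(1)=s(n-s+1)\tfrac{n+1}{2}$ that the paper carries out. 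In short: right skeleton, but the proof is not done until that computation is performed and each resulting term is shown nonnegative, with the local-global inequality invoked at the correct place.
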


The proof of the Edge Gluing Lemma uses a similar technique. We use the fact that if $P_n$ is a path of order $n$, then $N_{P_n} = \binom{n+1 }{2}$ and $W_{P_n} = \binom{n+2}{3}$ (see  \cite{Jamison1983}).

\begin{lemma}[The Edge Gluing Lemma] \label{edge_gluing}
Let $H$ be a connected block graph of order at least $3$ with adjacent non-cut vertices $u$ and $v$.  Fix a natural number $n\geq 4$.  For $s\in\{1,2,\dots,n-1\}$, let $G_s$ be the graph obtained from $H\cup P_s\cup P_{n-s}$ by gluing a leaf of $P_s$ to $u$ and a leaf of $P_{n-s}$ to $v$.  If $1\leq i<j\leq \tfrac{n}{2}$, then $M_{G_i}<M_{G_j}.$
\end{lemma}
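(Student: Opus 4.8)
The plan is to prove the consecutive inequality $M_{G_s}<M_{G_{s+1}}$ for every $s$ with $2s<n-1$, and then chain these inequalities along $i,i+1,\dots,j$. This chaining suffices: for $1\le i<j\le\tfrac n2$ every intermediate step has upper index at most $\tfrac n2$ and hence satisfies $2s<n-1$, the only excluded boundary being $2s=n-1$, which occurs for $n$ odd at $s=\tfrac{n-1}2$, where $G_s\cong G_{s+1}$ lies outside the stated range anyway. The structural idea is that $G_s$ and $G_{s+1}$ share a common core. In $G_s$ the right pendant (of length $n-s-1$) is the longer one; let $z$ be its far leaf. Deleting $z$ and re-attaching a leaf to the far end of the shorter (left) pendant produces exactly $G_{s+1}$, and $G_s-z=G_{s+1}-z=:J$. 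Partitioning $\mathcal C_{G_s}$ into the subgraphs avoiding $z$ (which are precisely $\mathcal C_J$) and those containing $z$, and likewise for $G_{s+1}$, gives
\[
M_{G_s}=\frac{W_J+W_{G_s,z}}{N_J+N_{G_s,z}},\qquad M_{G_{s+1}}=\frac{W_J+W_{G_{s+1},z}}{N_J+N_{G_{s+1},z}}.
\]

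Clearing denominators, $M_{G_{s+1}}>M_{G_s}$ becomes the cross-difference inequality
\[
W_J\,(N_{G_s,z}-N_{G_{s+1},z})+N_J\,(W_{G_{s+1},z}-W_{G_s,z})+(W_{G_{s+1},z}N_{G_s,z}-W_{G_s,z}N_{G_{s+1},z})>0,
\]
so everything reduces to the four local quantities at $z$. I would evaluate these by peeling off the pendant path carrying $z$. In $G_s$ the leaf $z$ ends a pendant path of length $R=n-s-1$ attached at $v$ to the graph $A$ obtained from $H$ by attaching a pendant path $P_{s-1}$ at $u$; this yields $N_{G_s,z}=R+N_{A,v}$ and $W_{G_s,z}=\binom{R+1}{2}+R\,N_{A,v}+W_{A,v}$. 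Symmetrically, in $G_{s+1}$ the leaf $z$ ends a pendant of length $s$ attached at $u$ to the graph $B$ obtained from $H$ by attaching $P_{n-s-2}$ at $v$, giving $N_{G_{s+1},z}=s+N_{B,u}$ and $W_{G_{s+1},z}=\binom{s+1}{2}+s\,N_{B,u}+W_{B,u}$. The essential tension is now explicit: compared with $G_{s+1}$, the graph $G_s$ has the \emph{longer} pendant at $z$ (length $R>s$, since $2s<n-1$) but the \emph{smaller} attached core ($A$ carries $P_{s-1}$ while $B$ carries $P_{n-s-2}$, and $s-1<n-s-2$), so the two effects move $N_{\cdot,z}$ and $W_{\cdot,z}$ in opposite directions and no termwise comparison is available.

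Substituting these expressions, together with $N_{P_m}=\binom{m+1}2$ and $W_{P_m}=\binom{m+2}3$, turns the cross-difference into an explicit expression whose remaining ingredients are the local quantities of $H$ at $u$, at $v$, and at the pair $\{u,v\}$. Signing it for $2s<n-1$ is the main obstacle, and my plan is to control the $A$- and $B$-quantities using the counting lemmas already in hand---Lemma~\ref{WagLem} (the bound $W_{\cdot,v}\le\tfrac12(N_{\cdot,v}^2+N_{\cdot,v})$), Lemma~\ref{WeiNum}, and Lemma~\ref{WeightBound}---and to use the local--global mean inequality of Theorem~\ref{LocalGlobal}\ref{CorA} to order the bracketed terms, since it gives $M_J\le M_{G_s}\le M_{G_s,z}$ and $M_J\le M_{G_{s+1}}\le M_{G_{s+1},z}$. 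The delicate point, and the reason this does not simply mirror the vertex-gluing argument (Lemma~\ref{vertex_gluing}), is that the two pendants hang from \emph{different} vertices $u\neq v$, so $H$ enters asymmetrically through both $N_{A,v},W_{A,v}$ and $N_{B,u},W_{B,u}$; I expect the final step to require combining the local--global inequality at $u$ and at $v$ with the quadratic counting bounds in order to absorb the cross term $W_{G_{s+1},z}N_{G_s,z}-W_{G_s,z}N_{G_{s+1},z}$ and extract the sign forced by the pendant-length asymmetry $s-1<n-s-2$.
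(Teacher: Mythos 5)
Your reduction is sound as far as it goes: the leaf-exchange observation that $G_s$ minus the far leaf of its $v$-pendant is isomorphic to $G_{s+1}$ minus the far leaf of its $u$-pendant is correct, the resulting common core $J$ gives valid decompositions $N_{G_s}=N_J+N_{G_s,z}$ and $W_{G_s}=W_J+W_{G_s,z}$, your cross-difference criterion for $M_{G_{s+1}}>M_{G_s}$ is the right one, and your closed forms $N_{G_s,z}=R+N_{A,v}$, $W_{G_s,z}=\binom{R+1}{2}+RN_{A,v}+W_{A,v}$ (and their analogues for $G_{s+1}$) are correct. The chaining over consecutive $s$ is also fine.

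The problem is that the proof stops exactly where the lemma's actual content lies. You yourself identify ``signing it for $2s<n-1$'' as the main obstacle and then offer only a list of tools you \emph{expect} to use; no inequality is actually established. This is not a routine finishing step. Your formulation requires comparing $W_{A,v},N_{A,v}$ against $W_{B,u},N_{B,u}$, where $A$ is $H$ with a pendant path of $s-1$ vertices at $u$ and $B$ is $H$ with a pendant path of $n-s-2$ vertices at $v$: these are two \emph{different} graphs of different orders, rooted at different vertices, and none of Lemma~\ref{WagLem}, Lemma~\ref{WeiNum}, Lemma~\ref{WeightBound}, or Theorem~\ref{LocalGlobal} relates quantities of $A$ at $v$ to quantities of $B$ at $u$. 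The paper avoids this asymmetry entirely by exploiting the hypothesis that $u$ and $v$ are \emph{adjacent non-cut} (hence simplicial, same-block) vertices, which yields $\Phi_{H,\{u,v\}}(x)=x\,\Phi_{H-v,u}(x)$ and $\Phi_{H-u,v}(x)=\Phi_{H-v,u}(x)$; this collapses everything to the four quantities $N_{F,u},W_{F,u},N_{F-u},W_{F-u}$ of the single fixed graph $F=H-v$, after which $M_{G_s}$ is differentiated as a function of a real variable $s$ and the sign is read off term by term using Lemmas~\ref{WeiNum}, \ref{NumCom}, and crucially $\mu_{F,u}\geq M_{F-u}$ from Lemma~\ref{MuLem} (not merely the local--global inequality). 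Your proposal makes no use of the adjacency or non-cut hypotheses on $u$ and $v$, which is a strong signal that the missing step cannot be closed in the form you have set up: without some such symmetry the cross term $W_{G_{s+1},z}N_{G_s,z}-W_{G_s,z}N_{G_{s+1},z}$ is not controlled by the cited lemmas. To complete the argument you would either need to import that same simplification (at which point you are essentially rederiving the paper's computation) or supply a genuinely new comparison between the rooted graphs $(A,v)$ and $(B,u)$, which you have not done.
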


\begin{proof}
For ease of notation, let $u$ and $v$ also denote the glued vertices in $P_s$ and $P_{n-s}$, respectively.  Let $w$ be the vertex adjacent to $u$ in $P_s$ and let $z$ be the vertex adjacent to $v$ in $P_{n-s}$.  We have
\begin{align*}
\Phi_{G_s}(x)&=\Phi_{G_s,\{u,v\}}(x)+\Phi_{G_s-v,u}(x)+\Phi_{G_s-u,v}(x)+\Phi_{G_s-\{u,v\}}(x)\\
&=\Phi_{H,\{u,v\}}(x)\left(1+\Phi_{P_{s-1},w}(x)\right)\left(1+\Phi_{P_{n-s-1},z}(x)\right)+\Phi_{H-v,u}(x)\left(1+\Phi_{P_{s-1},w}(x)\right)\\
& \ \ \ \ +\Phi_{H-u,v}(x)\left(1+\Phi_{P_{n-s-1},z}(x)\right)+\Phi_{H-\{u,v\}}(x)+\Phi_{P_{s-1}}(x)+\Phi_{P_{n-s-1}}(x)\\
&=x\Phi_{H-v,u}(x)\left(1+\Phi_{P_{s-1},w}(x)\right)\left(1+\Phi_{P_{n-s-1},z}(x)\right)+\Phi_{H-v,u}(x)\left(1+\Phi_{P_{s-1},w}(x)\right)\\
& \ \ \ \ +\Phi_{H-v,u}(x)\left(1+\Phi_{P_{n-s-1},z}(x)\right)+\Phi_{H-\{u,v\}}(x)+\Phi_{P_{s-1}}(x)+\Phi_{P_{n-s-1}}(x)\\
&=\Phi_{H-v,u}(x)\left[x\left(1+\Phi_{P_{s-1},w}(x)\right)\left(1+\Phi_{P_{n-s-1},z}(x)\right)+\left(1+\Phi_{P_{s-1},w}(x)\right)+\left(1+\Phi_{P_{n-s-1},z}(x)\right)\right]\\
& \ \ \ \ +\Phi_{H-\{u,v\}}(x)+\Phi_{P_{s-1}}(x)+\Phi_{P_{n-s-1}}(x).
\end{align*}
Differentiating with respect to $x$, we obtain
\begin{align*}
\Phi'_{G_s}(x)&=\Phi'_{H-v,u}(x)\left[x\left(1+\Phi_{P_{s-1},w}(x)\right)\left(1+\Phi_{P_{n-s-1},z}(x)\right)+\left(1+\Phi_{P_{s-1},w}(x)\right)+\left(1+\Phi_{P_{n-s-1},z}(x)\right)\right]\\
&\ \ \ \ +\Phi_{H-v,u}(x)\bigg[\left(1+\Phi_{P_{s-1},w}(x)\right)\left(1+\Phi_{P_{n-s-1},z}(x)\right)+x\Phi'_{P_{s-1},w}(x)\left(1+\Phi_{P_{n-s-1},z}(x)\right)\\
& \hspace{3cm} +x\left(1+\Phi_{P_{s-1},w}(x)\right)\Phi'_{P_{n-s-1},z}(x)+\Phi'_{P_{s-1},w}(x)+\Phi'_{P_{n-s-1},z}(x)\bigg]\\
& \ \ \ \ +\Phi'_{H-\{u,v\}}(x)+\Phi'_{P_{s-1}}(x)+\Phi'_{P_{n-s-1}}(x).
\end{align*}
Evaluating at $x=1$ and letting $F=H-v$,
\begin{align*}
\Phi_{G_s}(1)=N_{F, u}\left(s(n-s)+n\right)+N_{F-u}+\tbinom{s}{2}+\tbinom{n-s}{2},
\end{align*}
and
\begin{align*}
\Phi'_{G_s}(1)&=W_{F, u}\left(s(n-s)+n\right)+N_{F, u}\left(s(n-s)+\tbinom{s}{2}(n-s+1)+(s+1)\tbinom{n-s}{2}\right)\\
& \ \ \ \ +W_{F-u}+\tbinom{s+1}{3}+\tbinom{n-s+1}{3}.
\end{align*}
By a straightforward computation,
\begin{align}\label{NGs}
\tfrac{d }{ds}N_{G_s}=\tfrac{d}{ds}\Phi_{G_s}(1)=N_{F, u}(n-2s)-(n-2s)=(n-2s)(N_{F, u}-1),
\end{align}
and
\begin{align}\label{WGs}
\tfrac{d}{ds}W_{G_s}=\tfrac{d}{ds}\Phi'_{G_s}(1)&=W_{F, u}(n-2s)+N_{F, u}\tfrac{n-2}{2}(n-2s)-\tfrac{n}{2}(n-2s)\nonumber \\
&=(n-2s)\left[W_{F, u}+\tfrac{n-2}{2}N_{F, u}-\tfrac{n}{2}\right]\nonumber \\
&=(n-2s)\left[W_{F, u}+\tfrac{n}{2}(N_{F, u}-1)-N_{F, u}\right].
\end{align}
Now we consider the mean $M_{G_s}=\tfrac{W_{G_s}}{N_{G_s}}$ as a continuous function of $s$. By the quotient rule, $\tfrac{d}{ds}M_{G_s}$ will have the same sign as $\tfrac{d}{ds}\left[W_{G_s}\right]N_{G_s}-W_{G_s}\tfrac{d}{ds}\left[N_{G_s}\right]$ on $\left[1, \frac{n}{2}\right)$, since the denominator of $\frac{d}{ds}M_{G_s}$ is strictly positive on $\left[1, \frac{n}{2}\right)$.  Since both $\tfrac{d}{ds}N_{G_s}$ and $\tfrac{d}{ds}W_{G_s}$ have a factor of $n-2s$, the expression
\begin{align}\label{SameSignAsDeriv}
\frac{\tfrac{d}{ds}\left[W_{G_s}\right]N_{G_s}-W_{G_s}\tfrac{d}{ds}\left[N_{G_s}\right]}{n-2s},
\end{align}
will have the same sign as $\frac{d}{ds}M_{G_s}$ for $s\in[1,n/2)$.  We show that (\ref{SameSignAsDeriv}) is strictly positive for $s\in[1,n/2)$.  Substituting (\ref{NGs}) and (\ref{WGs}) into (\ref{SameSignAsDeriv}), and then simplifying, we obtain
\begin{align}
&\left[W_{F, u}+\tfrac{n}{2}(N_{F, u}-1)-N_{F, u}\right]\left[N_{F, u}\left(s(n-s)+n\right)+N_{F-u}+\tbinom{s}{2}+\tbinom{n-s}{2}\right]\nonumber\\
& \ \ \  -(N_{F, u}-1)\bigg[W_{F, u}\left(s(n-s)+n\right)+N_{F, u}\left(s(n-s)+\tbinom{s}{2}(n-s+1)+(s+1)\tbinom{n-s}{2}\right)\nonumber\\
& \hspace{3cm} +W_{F-u}+\tbinom{s+1}{3}+\tbinom{n-s+1}{3}\bigg]\nonumber\\
&=W_{F,u}\tbinom{n+1}{2}-N_{F, u}\tfrac{n^2}{2}-\tfrac{n}{2}N_{F-u}+\tfrac{n-2}{2}N_{F, u}N_{F-u}-N_{F, u}^2\tfrac{n}{2}\nonumber\\
& \ \ \ \ +W_{F, u}N_{F-u}-W_{F-u}N_{F, u}+W_{F-u}+(N_{F, u}-1)\tfrac{n(n-1)(n-2)}{12}\nonumber \\
\begin{split}\label{eqwe}
&=\tfrac{n^2}{2}(W_{F, u}-N_{F, u})+\tfrac{n}{2}(W_{F, u}-N_{F,u}-N_{F-u})+\tfrac{n}{2}N_{F,u}(N_{F-u}+1-N_{F,u})\\
& \ \ \ \ +\left(W_{F, u}N_{F-u}-W_{F-u}N_{F, u}-N_{F, u}N_{F-u}+W_{F-u}\right)+(N_{F, u}-1)\tfrac{n(n-1)(n-2)}{12}.
\end{split}
\end{align}
We now explain why each term of (\ref{eqwe}) is nonnegative (and in fact two terms are positive).  The strict inequality $W_{F,u}-N_{F,u}>0$ is obvious. The inequality $W_{F,u}-N_{F,u}-N_{F-u}=W_{F,u}-N_F\geq 0$ follows by Lemma~\ref{WeiNum}, and the inequality $N_{F-u}+1-N_{F,u}\geq 0$ follows by Lemma~\ref{NumCom}.  The inequality $(N_{F, u}-1)\frac{n(n-1)(n-2)}{12}>0$ is immediate since $N_{F,u}\geq 2$ and $n\geq 4$.
Finally,
\begin{align}\label{MuIneq}
W_{F, u}N_{F-u}-N_{F, u}N_{F-u}-N_{F, u}W_{F-u}+W_{F-u}\geq 0 \ \ \ &\Longleftrightarrow \ \ \   \frac{W_{F,u}-N_{F,u}}{N_{F,u}-1}\geq \frac{W_{F-u}}{N_{F-u}}\nonumber \\
&\Longleftrightarrow \ \ \ \mu_{F,u}\geq M_{F-u},
\end{align}
and (\ref{MuIneq}) holds by Lemma \ref{MuLem}.  Therefore, the function $\frac{d}{ds}M_{G_s}$ is strictly positive on the interval $\left[1, \frac{n}{2}\right)$, and we conclude that $M_{G_1}<M_{G_2}<... <M_{G_{\lfloor n/2\rfloor}}$.
\end{proof}
	
Finally, we prove the Stretching Lemma.  We recall some notation used in the statement.  Fix an integer $n\geq 3$.  For $s\in\{1,2,\dots,n-1\}$, let $F_{s,n-s}$ (or $F_s$ for short) denote the graph obtained from the disjoint union of $K_{s}$ and $P_{n-s}$ by joining a leaf of $P_{n-s}$ to every vertex of $K_s$.  Note that if $s=1$, then $F_s\cong P_n$, while if $s=n-1$, then $F_s\cong K_n$.

Let $v$ be a vertex of $F_s$ belonging to the initial $K_s$.  By straightforward counting arguments,
\[
\Phi_{F_s,v}(x)=x(1+x)^{s-1}\sum_{i=0}^{n-s}x^i
\]
and
\begin{align*}
\Phi_{F_s-v}(x)
&=(1+x)^{s-1}\left(\sum_{i=0}^{n-s}x^i\right)+\Phi_{P_{n-s-1}}(x)-1.
\end{align*}

\begin{lemma}[The Stretching Lemma] \label{stretching_lemma}
Let $H$ be a connected block graph of order at least $2$ having a vertex $u$.  Fix a natural number $n\geq 3$.  Let $v$ be a vertex of $F_s=F_{s,n-s}$ belonging to the initial $K_s$.  For $s \in \{1, \dots, n-1\}$, let $G_s$ be the block graph obtained from the disjoint union of $H$ and $F_{s}$ by identifying $u$ and $v$.  If $1\leq i<j\leq n-1$, then $M_{G_i}<M_{G_j}.$
\end{lemma}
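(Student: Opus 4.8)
The plan is to reduce the statement to the consecutive comparisons $M_{G_s}<M_{G_{s+1}}$ for each $s\in\{1,\dots,n-2\}$; the full claim for $1\le i<j\le n-1$ then follows from the resulting chain $M_{G_1}<M_{G_2}<\dots<M_{G_{n-1}}$. First I would obtain a closed form for $\Phi_{G_s}(x)$. Since the identified vertex $v$ (equal to $u$) is a cut vertex of $G_s$ separating $H-u$ from $F_s-v$, I partition $\mathcal{C}_{G_s}$ into the subgraphs containing $v$ and those avoiding $v$. The subgraphs avoiding $v$ lie in $G_s-v$, which is the disjoint union of $H-u$ and $F_s-v$, so $\Phi_{G_s-v}(x)=\Phi_{H-u}(x)+\Phi_{F_s-v}(x)$. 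For the subgraphs containing $v$, the product formula from the proof of Lemma~\ref{LocalSum}, with $H$ and $F_s$ as the two branches meeting at $v$, gives $\Phi_{G_s,v}(x)=\tfrac1x\Phi_{H,u}(x)\Phi_{F_s,v}(x)$. Substituting the stated formulas for $\Phi_{F_s,v}$ and $\Phi_{F_s-v}$ and writing $g_s(x)=\sum_{i=0}^{n-s}x^i$, these combine to
\[
\Phi_{G_s}(x)=\bigl(\Phi_{H,u}(x)+1\bigr)(1+x)^{s-1}g_s(x)+\Phi_{H-u}(x)+\Phi_{P_{n-s-1}}(x)-1.
\]

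Next I would evaluate at $x=1$ and differentiate. Writing $N=N_{H,u}$, $W=W_{H,u}$, $N'=N_{H-u}$, $W'=W_{H-u}$, and using $N_{P_{n-s-1}}=\binom{n-s}{2}$, $W_{P_{n-s-1}}=\binom{n-s+1}{3}$, $g_s(1)=n-s+1$, and $g_s'(1)=\binom{n-s+1}{2}$, a short computation (the bracket simplifies using $2\binom{n-s+1}{2}=(n-s+1)(n-s)$) yields
\begin{align*}
N_{G_s}&=2^{s-1}(n-s+1)(N+1)+N'+\tbinom{n-s}{2}-1,\\
W_{G_s}&=2^{s-1}(n-s+1)\Bigl[W+(N+1)\tfrac{n-1}{2}\Bigr]+W'+\tbinom{n-s+1}{3}.
\end{align*}
Writing $B_s=2^{s-1}(n-s+1)$, both quantities have the shape ``$B_s$ times a clique-type coefficient, plus a path-type remainder''.

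To prove monotonicity I would show $\Delta_s:=W_{G_s}N_{G_{s+1}}-W_{G_{s+1}}N_{G_s}<0$, which is equivalent to $M_{G_s}<M_{G_{s+1}}$ since all denominators are positive. Expanding $\Delta_s$, the leading products of the two $B$-terms cancel, and on collecting the remainder I expect the only genuinely $H$-dependent contribution to be $(B_s-B_{s+1})\bigl(W_{H,u}N_{H-u}-N_{H,u}W_{H-u}\bigr)$. Here $B_s<B_{s+1}$ for $s\le n-2$ (since $B_{s+1}/B_s=2(n-s)/(n-s+1)>1$ when $n-s\ge 2$), while the local-global inequality (Theorem~\ref{LocalGlobal}\ref{CorA}) applied to $H$ at $u$ gives $M_{H,u}\ge M_H$, equivalently $W_{H,u}N_{H-u}\ge N_{H,u}W_{H-u}$; hence this term is $\le 0$, the correct direction. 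The surviving terms are explicit in $n,s$ and the scalars $N,W,N',W'$, and I would control them using $W_{H,u}\ge N_H=N+N'$ (Lemma~\ref{WeiNum}) together with the sign of the purely combinatorial binomial cross-differences (e.g.\ $B_s\binom{n-s-1}{2}-B_{s+1}\binom{n-s}{2}$), whose signs depend on $n$ and $s$ alone.

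The hard part will be the bookkeeping in the expansion of $\Delta_s$ and the clean grouping of its terms into the signed $H$-interaction piece plus explicitly controllable path-binomial pieces. A key structural point, and the reason $u$ may be an \emph{arbitrary} vertex of $H$ (in contrast to the Edge Gluing Lemma), is that the only fact about $H$ that the argument should require is the local-global inequality $W_{H,u}N_{H-u}\ge N_{H,u}W_{H-u}$, which holds at every vertex of a connected block graph by Theorem~\ref{LocalGlobal}\ref{CorA}. I must therefore route the argument through vertex-universal facts (Theorem~\ref{LocalGlobal} and Lemma~\ref{WeiNum}) and avoid Lemma~\ref{NumCom}, which would need $u$ to be a non-cut vertex. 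Finally, since the $H$-interaction term can vanish (when $M_{H,u}=M_H$), the strict inequality $\Delta_s<0$ must be extracted from the path-binomial terms, which are strictly of the correct sign because $n\ge 3$; I would isolate one such strictly-signed term to close the argument.
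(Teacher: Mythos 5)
Your proposal follows essentially the same route as the paper: the same cut-vertex decomposition of $\Phi_{G_s}$, the same closed forms for $N_{G_s}$ and $W_{G_s}$ (your bracket $W_{H,u}+(N_{H,u}+1)\tfrac{n-1}{2}$ agrees with the paper's $2^{s-2}(n-s+1)\left[(n-1)(N_{H,u}+1)+2W_{H,u}\right]$), and the same key input, namely the local-global inequality $W_{H,u}N_{H-u}\geq N_{H,u}W_{H-u}$ at an \emph{arbitrary} vertex $u$, which you correctly identify as the reason the hypothesis on $u$ is weaker than in the Edge Gluing Lemma. The one genuine difference is procedural: you compare consecutive terms via $\Delta_s=W_{G_s}N_{G_{s+1}}-W_{G_{s+1}}N_{G_s}$, whereas the paper treats $s$ as a real variable and shows $\tfrac{d}{ds}M_{G_s}>0$. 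The discrete version avoids the $\ln 2$ factors the paper must carry through its estimates, at the cost of slightly messier binomial difference terms; both are sound. Two cautions on your sketch. First, after the leading products cancel, the coefficient of $B_s-B_{s+1}$ in $\Delta_s$ is not just $W_{H,u}N_{H-u}-N_{H,u}W_{H-u}$ but the larger quantity $\bigl(W_{H,u}+(N_{H,u}+1)\tfrac{n-1}{2}\bigr)N_{H-u}-(N_{H,u}+1)W_{H-u}$; establishing its nonnegativity needs, in addition to Theorem~\ref{LocalGlobal}\ref{CorA}, the bound $2W_{H-u}\leq N_{H,u}N_{H-u}$ of Lemma~\ref{WeightBound}, which you do not cite but which the paper uses for exactly this purpose (its term $E_1$). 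Second, the remaining binomial cross-differences do come out with the correct signs and the bookkeeping closes, but this verification is the bulk of the proof and you have only asserted it.
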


\begin{proof}
The general technique is to write $M_{G_s}$ as a function of $s$, treat $s$ as a real variable, and show that $M_{G_s}$ is increasing for $s\in[1,n-1]$.  In other words, we show that $\tfrac{d}{ds}\left[M_{G_s}\right]=\tfrac{d}{ds}\left[W_{G_s}/N_{G_s}\right]>0$ for all $s\in [1,n-1]$.  By the quotient rule, $\tfrac{d}{ds}\left[M_{G_s}\right]$ has the same sign as
\begin{align}\label{derivative}
\tfrac{d}{ds}\left[W_{G_s}\right]N_{G_s}-W_{G_s}\tfrac{d}{ds}\left[N_{G_s}\right],
\end{align}
so it suffices to show that this expression is positive for all $s\in[1,n-1]$.

Let $s\in[1,n-1]$.  We first derive expressions for $N_{G_s}$, $W_{G_s}$, $\tfrac{d}{ds}[N_{G_s}]$, and $\tfrac{d}{ds}[W_{G_s}]$.  We have
\begin{align*}
\Phi_{G_s}(x)&=\frac{\Phi_{F_s,v}(x)\Phi_{H,u}(x)}{x}+\Phi_{F_s-v}(x)+\Phi_{H-u}(x)\\
&=(1+x)^{s-1}\left(\sum_{i=0}^{n-s}x^i\right)\left[1+\Phi_{H,u}(x)\right]+\Phi_{H-u}(x)+\Phi_{P_{n-s-1}}(x)-1.
\end{align*}
Substituting $x=1$ yields
\begin{align}\label{NG}
N_{G_s}=2^{s-1}(n-s+1)(N_{H,u}+1)+N_{H-u}+\tbinom{n-s}{2}-1.
\end{align}
We also find
\begin{align*}
\Phi'_{G_s}(x)&=\left[(s-1)(1+x)^{s-2}\left(\sum_{i=0}^{n-s}x^i\right)+(1+x)^{s-1}\left(\sum_{i=0}^{n-s}ix^{i-1}\right)\right]\left[1+\Phi_{H,u}(x)\right]\\
& \ \ \ +(1+x)^{s-1}\left(\sum_{i=0}^{n-s}x^i\right)\left[\Phi'_{H,u}(x)\right]+\Phi'_{H-u}(x)+\Phi'_{P_{n-s-1}}(x).
\end{align*}
Substituting $x=1$ and simplifying yields
\begin{align}
W_{G_s}&=\left[(s-1)2^{s-2}(n-s+1)+2^{s-1}\tbinom{n-s+1}{2}\right]\left[N_{H,u}+1\right]\nonumber\\
& \ \ \ +2^{s-1}(n-s+1)W_{H,u}+W_{H-u}+\tbinom{n-s+1}{3}\nonumber \\
&=2^{s-2}(n-s+1)\left[(n-1)\left(N_{H,u}+1\right)+2W_{H,u}\right]+W_{H-u}+\tbinom{n-s+1}{3}.\label{WG}
\end{align}
Differentiating (\ref{NG}) and (\ref{WG}) with respect to $s$, and letting $L=\ln(2)$ for ease of reading, we find
\begin{align}
\tfrac{d}{ds}\left[N_{G_s}\right]&=2^{s-1}L\left(n-s+1-\tfrac{1}{L}\right)(N_{H,u}+1)-\tfrac{2(n-s)-1}{2}, \mbox{ and }\label{NGprime}\\
\tfrac{d}{ds}\left[W_{G_s}\right]&=2^{s-2}L\left(n-s+1-\tfrac{1}{L}\right)\left[\left(n-1\right)\left(N_{H,u}+1\right)+2W_{H,u}\right]-\tfrac{3(n-s)^2-1}{6}\label{WGprime}
\end{align}

For convenience, we let $t=n-s$ and rewrite (\ref{NG}), (\ref{WG}), (\ref{NGprime}), and (\ref{WGprime}) below.  Since $s\in[1,n-1]$, we have $t\in[1,n-1]$ as well.
\begin{align*}
N_{G_s}&=2^{s-1}\left(t+1\right)(N_{H,u}+1)+N_{H-u}+ \tbinom{t}{2}-1,\\
W_{G_s}&=2^{s-2}(t+1)\left[(n-1)\left(N_{H,u}+1\right)+2W_{H,u}\right]+W_{H-u}+\tbinom{t+1}{3},\\
\tfrac{d}{ds}[N_{G_s}]&=2^{s-1}L\left(t+1-\tfrac{1}{L}\right)(N_{H,u}+1)-\tfrac{2t-1}{2}, \mbox{ and }\\
\tfrac{d}{ds}[W_{G_s}]&=2^{s-2}L\left(t+1-\tfrac{1}{L}\right)\left[\left(n-1\right)\left(N_{H,u}+1\right)+2W_{H,u}\right]-\tfrac{3t^2-1}{6}.
\end{align*}
By substituting these expressions into (\ref{derivative}), expanding, and regrouping (and confirming with a computer algebra system), we find
\begin{align*}
\tfrac{d}{ds}\left[W_{G_s}\right]N_{G_s}-W_{G_s}\tfrac{d}{ds}\left[N_{G_s}\right]
&=\sum_{i=1}^5 E_i,
\end{align*}
where
\begin{align*}
E_1&=2^{s-2}\left(Lt+L-1\right)\left[(N_{H,u}+1)N_{H-u}-2W_{H-u}\right],\\
E_2&=2^{s-1}\left(Lt+L-1\right)\left[W_{H,u}N_{H-u}-N_{H,u}W_{H-u}\right],\\
E_3&=2^{s-3}(t+1)^2\left[(N_{H,u}+1)\left((n-1)(Lt-2L+1)-\tfrac{2}{3}(Lt^2+(2-L)t-1)\right)+2W_{H,u}(Lt-2L+1)\right],\\
E_4&=-\tfrac{1}{12}(t+1)^2(t^2-4t+2), \mbox{ and }\\
E_5&=2^{s-2}\left(Lt+L-1\right)(n-2)(N_{H,u}+1)N_{H-u}+\tfrac{2t-1}{2}W_{H-u}-\tfrac{3t^2-1}{6}N_{H-u}.
\end{align*}

We now show that $\sum_{i=1}^5 E_i>0$.  We first demonstrate that $E_1>0$ and $E_2\geq 0$.  Note that since $t\geq 1$, the factor $Lt+L-1\geq 2L-1>0$.  By Lemma~\ref{WeightBound}, $N_{H,u}N_{H-u}\geq 2W_{H-u}$.  Therefore,
\[
E_1\geq 2^{s-2}(Lt+L-1)N_{H-u}>0.
\]
By Theorem~\ref{LocalGlobal}\ref{CorA}, $M_{H,u}\geq M_{H-u}$, or equivalently $W_{H,u}N_{H-u}\geq W_{H-u}N_{H,u}$.  It follows immediately that $E_2\geq 0$.

Next we show that $E_3+E_4> 0$.  We begin by bounding $E_3$.  Since $H$ has order at least $2$, we have $W_{H,u}\geq N_{H,u}+1$.  This gives
\[
E_3\geq 2^{s-3}(t+1)^2(N_{H,u}+1)\left[(n+1)(Lt-2L+1)-\tfrac{2}{3}(Lt^2+(2-L)t-1)\right]
\]
Now we use the fact that $t\leq n-1$, or equivalently $n+1\geq t+2$.  This gives
\begin{align*}
E_3&\geq 2^{s-3}(t+1)^2(N_{H,u}+1)\left[(t+2)(Lt-2L+1)-\tfrac{2}{3}(Lt^2+(2-L)t-1)\right]\\
&=2^{s-3}(t+1)^2(N_{H,u}+1)\tfrac{1}{3}\left[Lt^2+(2L-1)t+8-12L\right]
\end{align*}
Finally, since $H$ has order at least $2$, we have $N_{H,u}\geq 2$.  Applying this inequality along with $s\geq 1$ gives
\[
E_3\geq \tfrac{1}{4}(t+1)^2\left[Lt^2+(2L-1)t+8-12L\right].
\]
Therefore,
\begin{align*}
E_3+E_4&\geq \tfrac{1}{4}(t+1)^2\left[Lt^2+(2L-1)t+8-12L\right]-\tfrac{(t+1)^2(t^2-4t+2)}{12}\\
&=\tfrac{1}{12}(t+1)^2\left[(3L-1)t^2+(6L+1)t+22-36L\right].
\end{align*}
Recalling that $L=\ln(2)$, one can verify that the quadratic in the square brackets of this last expression is positive for all $t\geq 1$.  Thus, we conclude that $E_3+E_4>0$.

Finally, we show that $E_5>0$.  We use the inequalities $W_{H-u}\geq N_{H-u}$, $N_{H,u}\geq 2$, $s\geq 1$, and $n-2\geq t-1$.
\begin{align*}
E_5&\geq \tfrac{3}{2}\left(Lt+L-1\right)(t-1)N_{H-u}+\tfrac{2t-1}{2}N_{H-u}-\tfrac{3t^2-1}{6}N_{H-u}\\
&=\tfrac{1}{6}\left[(9L-3)t^2-(9L+3)t+7\right]
\end{align*}
Recalling that $L=\ln(2)$, it is straightforward to verify that this last expression is strictly positive for all $t$, and hence $E_5>0$.

We have shown that $E_1>0,$ $E_2\geq 0$, $E_3+E_4>0$, and $E_5>0$ for all $s\in[1,n-1]$.  It follows that $\tfrac{d}{ds}\left[W_{G_s}\right]N_{G_s}-W_{G_s}\tfrac{d}{ds}\left[N_{G_s}\right]=\sum_{i=1}^5 E_i>0$ for all $s\in[1,n-1]$, as desired.
\end{proof}

\section{Concluding Remarks}

In this article, we demonstrated that among all connected block graphs of order $n$, the path has smallest mean CIS order.  This extends Jamison's result: {\em Among all trees of order $n$, the path has smallest mean subtree order}. Moreover, our main result lends support to the conjecture made in \cite{KroekerMolOellermann2018}:  {\em Among all connected graphs of order $n$, the path has minimum mean CIS order}.

\medskip

The problem of determining the structure of those block graphs, of a given order, with maximum mean CIS order remains open. It was conjectured by Jamison \cite{Jamison1983} that a tree with maximum mean subtree order among all trees of order $n$, called an {\em optimal} tree of order $n$, is a caterpillar. This is known as Jamison's Caterpillar Conjecture. This conjecture has been verified for all $n\leq 24$ (see~\cite{Jamison1983, MolOellermann2017}).  Mol and Oellermann~\cite{MolOellermann2017} made some progress on describing the structure of optimal trees. They proved that in any optimal tree of order $n$, every leaf is adjacent with a vertex of degree at least $3$, and that the number of leaves in an optimal tree of order $n$ is $\mathrm{O}(\log_2n)$ (moreover, the number of leaves is $\Theta(\log_2n)$ if Jamison's Caterpillar Conjecture is true).

Turning to block graphs, for $n\in\{3,4\}$, the complete graph has maximum mean CIS order among all block graphs of order $n$. We have verified that for $5\leq n\leq 11$, the block graph of order $n$ with maximum mean CIS order is a tree (more specifically, a caterpillar). We make the following conjecture, which strengthens Jamison's Caterpillar Conjecture.

\begin{conjecture}
For $n\geq 5$, if $G$ has maximum mean CIS order among all block graphs of order $n$, then $G$ is a caterpillar.
\end{conjecture}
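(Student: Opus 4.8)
The plan is to decompose the conjecture into two independent claims. Fix $n \ge 5$ and let $G^*$ be a block graph of order $n$ of maximum mean CIS order. Claim (a): $G^*$ is a tree, i.e.\ $G^*$ has no cyclic block. Claim (b): among all trees of order $n$, a maximizer of the mean CIS order (equivalently the mean subtree order) is a caterpillar. Claim (b) is exactly Jamison's Caterpillar Conjecture, which is open (verified only for $n \le 24$), so the genuinely new content is Claim (a); establishing it unconditionally would reduce the present conjecture to Jamison's, making the two equivalent. I would therefore concentrate on proving Claim (a) and then invoke any eventual resolution of Jamison's conjecture as a black box.

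For Claim (a) the guiding heuristic is that a cyclic block $K_m$ with $m \ge 3$ forces an abundance of \emph{small} connected induced subgraphs --- every one of the $2^m-1$ nonempty subsets of its vertex set induces a connected subgraph --- which ought to depress the mean relative to a tree on the same vertex set. The delicate point, and the reason the paper's machinery does not apply directly, is that the Stretching Lemma (Lemma~\ref{stretching_lemma}) points the opposite way: within the family $F_{s,n-s}$ the mean \emph{increases} with the clique size $s$, so enlarging a block is beneficial there, and one must explain why a maximizer nonetheless avoids cliques entirely. The resolution must be that the optimal caterpillar, which does not belong to the family $F_{s,n-s}$, strictly beats even $K_n$ (whose mean $\frac{n2^{n-1}}{2^n-1}$ is just under $n/2$); this is consistent with the verified cases $n \le 11$. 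Concretely, I would seek a new ``max-direction'' local operation --- an analogue of the gluing lemmas that replaces a cyclic end-block, or a cyclic block incident with an antenna, by a suitable tree gadget and strictly increases the mean. As in the proof of the main theorem, the block--cut-vertex-tree analysis guarantees that such a block always exists when $G^*$ has a cyclic block, so iterating the operation would drive $G^*$ to a tree while strictly raising the mean, contradicting maximality unless $G^*$ was already a tree.

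The main obstacle is twofold. First, and decisively, Claim (b) is Jamison's open Caterpillar Conjecture, so no unconditional proof of the full statement is within reach; the honest target is the conditional result ``if the extremal tree is a caterpillar, then the extremal block graph is a caterpillar,'' together with the unconditional Claim (a). Second, even Claim (a) requires genuinely new inequalities: every key lemma in the paper (the Vertex Gluing, Edge Gluing, and Stretching Lemmas, together with the local-global inequality) is engineered to \emph{lower} the mean toward the path, so none of them, as stated, certifies that de-cliqueing raises the mean. Designing the required max-direction operation and proving the attendant inequality --- plausibly via an upper bound of the form $M_G \le \tfrac{n}{2} + \mathrm{o}(n)$ for every block graph $G$ with a cyclic block, matched against a lower bound on the optimal caterpillar's mean --- is where I expect the real difficulty to lie.
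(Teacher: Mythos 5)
The statement you were asked to prove is a \emph{conjecture} in the paper: the authors offer no proof, only computational verification for $5\leq n\leq 11$, and they explicitly note that it strengthens Jamison's Caterpillar Conjecture, which is itself open. Your proposal correctly diagnoses this situation --- the decomposition into ``the maximizer is a tree'' plus ``the maximizing tree is a caterpillar'' is the natural one, your observation that the Stretching Lemma pushes in the wrong direction (it shows the mean \emph{increases} with clique size within the family $F_{s,n-s}$, so none of the paper's lemmas can be reused to de-clique a maximizer) is accurate, and your assessment that Claim (b) is out of reach is honest. But as a proof the attempt has a genuine gap at every step: neither Claim (a) nor Claim (b) is established, and the ``max-direction'' replacement operation you propose is never constructed, so nothing is actually proved. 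Since the paper proves nothing here either, there is no proof to compare against; the correct verdict is simply that this remains open.

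One concrete flaw in the strategy you sketch for Claim (a): the proposed uniform bound $M_G\leq \tfrac{n}{2}+\mathrm{o}(n)$ for every block graph $G$ containing a cyclic block is false. Take an optimal (or any near-optimal) caterpillar of order $n-1$, whose mean subtree order is $n-\Theta(\log_2 n)$, and attach a single new vertex adjacent to both endpoints of one edge, creating one triangle. The resulting block graph has a cyclic block, yet its mean CIS order is still $n-\mathrm{O}(\log_2 n)$, far above $n/2$. So any successful argument for Claim (a) must be a \emph{local} comparison (replace the offending block by a tree gadget and show the mean strictly increases), not a global upper bound on graphs with cyclic blocks; designing and proving such a local inequality is the real open difficulty, and your proposal does not supply it.
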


\providecommand{\bysame}{\leavevmode\hbox to3em{\hrulefill}\thinspace}
\providecommand{\MR}{\relax\ifhmode\unskip\space\fi MR }
\providecommand{\MRhref}[2]{%
  \href{http://www.ams.org/mathscinet-getitem?mr=#1}{#2}
}
\providecommand{\href}[2]{#2}

\begin{appendix}

\appendix

\section{Proof of the Vertex Gluing Lemma}\label{Appendix}

\noindent
\textbf{The Vertex Gluing Lemma} (Lemma \ref{vertex_gluing})\\
\textit{Let $H$ be a connected block graph of order at least $2$ having vertex $v$.  Fix a natural number $n\geq 3$.  Let $P:u_1\dots u_n$ be a path of order $n$. For $s \in \{1, \dots, n\}$, let $G_s$ be the block graph obtained from the disjoint union of $P_n$ and $H$ by gluing $v$ to $u_s$.  If $1\leq i<j\leq \tfrac{n+1}{2}$, then $M_{G_i}<M_{G_j}.$}

\begin{proof} We may assume $s \le \frac{n+1}{2}$.  The connected induced subgraphs of $G_s$ can be partitioned into three types:
\begin{itemize}
\item Those that lie in $P_n$ but do not contain $u_s$.  These are counted by the polynomial $\Phi_{P_n}(x)-\Phi_{P_n, u_s}(x)$.
\item Those that lie in $H$ but do not contain $v$.  These are counted by the polynomial $\Phi_H(x)-\Phi_{H, v}(x)$.
\item Those that contain the glued vertex.  These are counted by the polynomial $\displaystyle\frac{\Phi_{P_n, u_s}(x)\Phi_{H, v}(x)}{x}$.
\end{itemize}
Thus,
\begin{align}\label{Phi}
\Phi_{G_s}(x)= \Phi_{P_n}(x)-\Phi_{P_n, u_s}(x)+\Phi_H(x)-\Phi_{H, v}(x) + \frac{\Phi_{P_n, u_s}(x)\Phi_{H, v}(x)}{x}.
\end{align}
Evaluating the derivative gives
\begin{align}\label{PhiPrime}
\begin{split}
\Phi'_{G_s}(x)&=  \Phi'_{P_n}(x)-\Phi'_{P_n, u_s}x)+\Phi'_H(x) -\Phi'_{H, v}(x)\\
& \ \ \ \ + \frac{\Phi'_{P_n, u_s}(x)\Phi_{H, v}(x)}{x}+\Phi_{P_n, u_s}(x)\left[\frac{x\Phi'_{H, v}(x)-\Phi_{H, v}(x)}{x^2}\right].
\end{split}
\end{align}
Evaluating (\ref{Phi}) and (\ref{PhiPrime}) at $x=1$ yields
\begin{align}\label{TsAt1}
\Phi_{G_s}(1)&= \Phi_{P_n}(1)+\Phi_H(1)-\Phi_{H, v}(1) + \Phi_{P_n, u_s}(1)\left[\Phi_{H, v}(1)-1\right],
\end{align}
and
\begin{align}\label{TsprimeAt1}
\begin{split}
\Phi'_{G_s}(1)&=  \Phi'_{P_n}(1)+\Phi'_H(1) -\Phi'_{H, v}(1)\\
& \ \ \ \ +\Phi'_{P_n, u_s}(1)\left[\Phi_{H, v}(1)-1\right]+\Phi_{P_n, u_s}(1)\left[\Phi'_{H, v}(1)-\Phi_{H, v}(1)\right],
\end{split}
\end{align}
respectively.


Note that $\Phi_{P_n}(1)= \tbinom{n +1}{2},$ $\Phi'_{P_n}(1) = \tbinom{n+2}{3},$  $\Phi_{P_n, u_s}(1) = s(n-s+1)$, and $\Phi'_{P_n, u_s}(1)=s(n-s+1)\tfrac{n+1}{2}.$  Using (\ref{TsAt1}) and (\ref{TsprimeAt1}) and substituting the values given in this paragraph, we obtain
\[
M_{G_s}=\frac{\Phi'_{G_s}(1)}{\Phi_{G_s}(1)}=\frac{\tbinom{n+2}{3}+ W_H-W_{H,v} + s(n-s+1)[(N_{H,v}-1)\tfrac{n+1}{2}+ W_{H,v} -N_{H,v}]}{\tbinom{n+1}{2} + N_H-N_{H,v} + s(n-s+1)[N_{H,v}-1]}.
\]

We show that if we view $M_{G_s}$ as a real valued function of $s \in \left[1, \frac{n+1}{2}\right]$, then $M_{G_s}$ is increasing on $\left[1, \frac{n+1}{2}\right]$.  Since the denominator of $M_{G_s}$ is strictly positive on the entire interval $\left[1, \frac{n+1}{2}\right]$, the derivative of $M_{G_s}$ exists and, by the quotient rule, it has the same sign as the function $f$ defined by
\begin{align*}
f(s)&=\tfrac{d}{ds}[\Phi'_{G_s}(1)]\Phi_{G_s}(1)-\tfrac{d}{ds}[\Phi_{G_s}(1)]\Phi'_{G_s}(1).
\end{align*}
Since $\tfrac{d}{ds}(s(n-s+1)) = n-2s+1$ is a factor of $f(s)$, we see that $f(s) = 0$ when $s = \frac{n+1}{2}$.  Moreover, for $s \in \left[1, \frac{n+1}{2}\right)$, we have $(n-2s+1)>0$ so that $f(s)$ has the same sign as
\begin{align*} \tfrac{f(s)}{n-2s+1}&= \left[(N_{H,v}-1) \tfrac{n+1}{2}+W_{H,v}-N_{H,v}\right]\left[\tbinom{n+1}{2} + N_H-N_{H,v} +s(n-s+1)(N_{H,v}-1)\right] \\
&\ \ \ \  - (N_{H,v}-1)\left[\tbinom{n+2}{3} + W_H - W_{H,v} + s(n-s+1)\left[(N_{H,v}-1)\tfrac{n+1}{2}+W_{H,v}-N_{H,v}\right]\right]\\
&=\left[(N_{H,v}-1)\tfrac{n+1}{2}+W_{H,v}-N_{H,v}][\tbinom{n+1}{2}+N_H-N_{H,v}\right]-(N_{H,v}-1)\left[\tbinom{n+2}{3} + W_H-W_{H,v}\right] \\
&=(N_{H,v}-1)\left[\tfrac{n+1}{2}\tbinom{n+1}{2} -\tbinom{n+2}{3}\right] +(W_{H,v}-N_{H,v})\tbinom{n+1}{2}+ (N_{H,v}-1)\tfrac{n+1}{2}(N_H-N_{H,v}) \\
&\ \ \ \ + (W_{H,v}-N_{H,v})(N_H-N_{H,v}) - (N_{H,v}-1)(W_H-W_{H,v})\\
&=(N_{H,v}-1)\left[\tfrac{n+1}{2}\tbinom{n+1}{2} -\tbinom{n+2}{3}\right] +(W_{H,v}-N_{H,v})\tbinom{n+1}{2}+ (N_{H,v}-1)\tfrac{n-1}{2}(N_H-N_{H,v}) \\
& \ \ \ \ +(N_{H,v}-1)(N_H-N_{H,v})+ (W_{H,v}-N_{H,v})(N_H-N_{H,v}) - (N_{H,v}-1)(W_H-W_{H,v})\\
&=(N_{H,v}-1)\tfrac{n(n+1)(n-1)}{12} + (W_{H,v}-N_{H,v})\tbinom{n+1}{2} +(N_{H,v}-1)\tfrac{n-1}{2}(N_H-N_{H,v})  \\
&\ \ \ \ +\left[(W_H - N_H) -(W_{H,v}-N_{H,v})\right]+ (N_HW_{H,v} -W_HN_{H,v}).
\end{align*}

Note that $\tfrac{f(s)}{n-2s+1}$ does not depend on $s.$  Thus, it suffices to show that each of the terms in the final expression for $\tfrac{f(s)}{n-2s+1}$ shown above is nonnegative (and at least one is strictly positive).  Indeed, using the straightforward inequalities $N_{H,v}>1$, $W_{H,v} > N_{H,v}$, $N_H > N_{H,v}$, and $n \ge 3$, it follows that
\begin{align*}
(N_{H,v}-1)\tfrac{n(n+1)(n-1)}{12}&>0,\\
(W_{H,v}-N_{H,v})\tbinom{n+1}{2}&>0, \mbox{ and}\\
(N_{H,v}-1)\tfrac{n-1}{2}(N_H-N_{H,v}) &>0.
\end{align*}
Let $k$ denote the order of $H$ and assume, for $1 \le i \le k$, that $H$ has $a_ i$ connected induced subgraphs of order $i$ and $b_i$ connected induced subgraphs of order $i$ that contain $v$. Then $a_i \ge b_i$ for $1 \le i \le k$ and $\displaystyle W_H=\sum_{i=1}^k ia_i$, $\displaystyle N_H =\sum_{i=1}^k a_i$, $\displaystyle W_{H,v} = \sum_{i=1}^k ib_i$ and $\displaystyle N_{H,v}=\sum_{i=1}^k b_i$.  Thus
\[
(W_H - N_H) -(W_{H,v}-N_{H,v})=\sum_{i=1}^k (i-1)(a_i-b_i)\ge 0.
\]
Finally, by Theorem~\ref{LocalGlobal}\ref{CorA},
\[
N_HW_{H,v} -W_HN_{H,v}=N_HN_{H,v}\left(\frac{W_{H,v}}{N_{H,v}}-\frac{W_H}{N_H}\right) > 0.
\]
We conclude that $f(s)$ is positive on $\left[1, \tfrac{n+1}{2}\right),$ so that $M_{G_s}$ is indeed increasing on $\left[1,\tfrac{n+1}{2}\right]$, and this completes the proof.
\end{proof}

\end{appendix}

\end{document}